\newtheorem{lem}{Lemma}[section]
\newtheorem{cor}[lem]{Corollary}
\newtheorem{prop}[lem]{Proposition}
\newtheorem{thm}[lem]{Theorem}
\newtheorem{defin}[lem]{Definition}
\newtheorem{Ex}[lem]{Example}
\newtheorem{Remark}[lem]{Remark}
\newtheorem{Construction}[lem]{Construction}
\newtheorem{Notation}[lem]{Notation}
\newtheorem{Fact}[lem]{Fact}
\newtheorem{Discussion}[lem]{Discussion}
\newtheorem{Notationdefinition}[lem]{Definition/Notation}
\newtheorem{Remarkdefinition}[lem]{Remark/Definition}
\newenvironment{remark}{\begin{Remark}\rm}{\end{Remark}}
\newcommand{\PP}{\mathbb{P}}
\newcommand{\RR}{\mathbb{R}}
\newcommand{\ZZ}{\mathbb{Z}}
\newcommand{\Ind}{\mathrm{Ind}}
\newcommand{\Inf}{\mathrm{Inf}}
\newcommand{\GST}{\mathrm{GST}}
\newcommand{\rank}{\mathrm{rk}}
\newcommand{\image}{\mathrm{im}}
\newcommand{\idots}{\reflectbox{$\ddots$}}
\begin{document}

\bibliographystyle{amsplain}

\title[The structure of symmetric n-player games]{The structure of symmetric n-player games when influence and independence collide}

\author[Steel]{Mike Steel}
\author[Taylor]{Amelia Taylor}

\date{\today}


\begin{abstract}
We study the mathematical properties of probabilistic processes in
which the independent actions of $n$ players (`causes') can influence
the outcome of each player (`effects').   In such a setting,  each
pair of outcomes will generally be statistically correlated, even if
the actions of all the players provide a complete causal description
of the players' outcomes, and even if we condition on the outcome of
any one player's action.  This correlation always holds  when $n=2$,  but
when $n=3$ there exists a highly symmetric process, recently studied,
in which each cause can influence each effect, and yet each pair of
effects is probabilistically independent (even upon conditioning on any one
cause).  We study such symmetric processes in more detail, obtaining a
complete classification for all $n \geq 3$.  Using a
variety of mathematical techniques, we describe the geometry and
topology of the underlying probability space that allows independence
and influence to coexist. 
\end{abstract}

\keywords{Conditional independence, causality, quadratic form, homology}

\maketitle

\section{Introduction}

The study of causality is a long-standing topic at the interface of
statistics and the philosophy of science. It is also an area where
the mathematical analysis of graphical models has led to some
important recent advances (see e.g. \cite{hof, pearl}). In this paper, we
investigate a particular class of symmetric causal processes which
achieves two apparently conflicting requirements (`independence' and `influence' defined shortly).

In Section~\ref{sec:background}, we provide formal definitions, but give
the main ideas here to facilitate the discussion.   Let $E_1, \ldots E_n$ be $n$ dichotomous (two states) random variables
with the same state spaces, which we call `effects' and let $C_1, \ldots
C_n$ be $n$ independent dichotomous random variables, also with the
same state spaces, which we call `causes'.   

We say that a  cause
$C_i$ `influences' effect $E_j$ if there exists at least one
assignment of states for the remaining causes such that a change in
the state of $C_i$ changes the (conditional) probability of at least
one state of the $E_j$ \cite{SS}.   `Independence' refers to pair-wise
probabilistic independence of the effects (either absolutely, or conditional on
knowing the state of any one cause). 

We explore a symmetric system because it is applicable to any scenario
in which the probability of $E_i$ depends just on how many causes
take the same value as $C_i$. We can view this process as a game where
we identify $C_i$ with the action of some player $i$;  the
outcome for each player $i$ (i.e. $E_i$) then depends solely on how many of
the other players chose the same action.  

For example, suppose there are $n$ flowering plants in an area of
study.  For plant $i$, the cause $C_i$ might describe whether the plant
flowers early or late.  The corresponding effect $E_i$ could denote
whether or not a plant is 
pollinated.  For example, flowering early with many other flowers might
be advantageous because such a mass flowering attracts more bees and increases
the probability the plant is pollinated.  On the other hand, there may
be a limit in the number of bees,  so flowering at the same time as a smaller number of
plants may also be advantageous.  Either way, the probability of an
effect (pollination of plant $i$) 
depends on the number of causes which match the cause of that particular
effect (i.e. how many other plants flower at the same time as plant $i$).  

Recently, such processes have been studied in the philosophy of science literature
as they provide insights into the extent to which subsets of causes can render effects independent
(Theorem 5b of \cite{SS}).  The authors of \cite{SS} illustrated such
a process with an entertaining application involving $n$ people playing a tequila drinking game.
In~\cite{SS} they consider just the case $n = 3$.  In the game, the $n$ people
simultaneously and independently reveal a clenched fist or an open
hand (with equal probability), and the states of
the $n$ hands are regarded as the $n$ causes.  The event that person $i$ drinks
tequila is $E_i$, for $1\leq i\leq n$.  The rules for determining if
person $E_i$ drinks when $n = 3$ are that if a player's hand position
is unique then they drink with probability $p_1= 1$.  For the ties (e.g. a
tie of two or three), those in the tie drink independently with
probability $p_2=1/2$ when there are two people in the tie and probability
$p_3=1/3$ when there are three people in the tie (see Fig.~\ref{hands}).  The probabilities used
here are quite special when we consider influence and independence in
relation to each other and the effect on the system.  We study what is
special and how it can generalize.  We call this extension of this game  to $n$ players the `generalized
symmetric tequila problem' (GST) but, as noted in the previous paragraph, the relevance of such processes extends well 
beyond bar drinking games.

\begin{figure}[ht]
\begin{center}
\includegraphics[scale=.55]{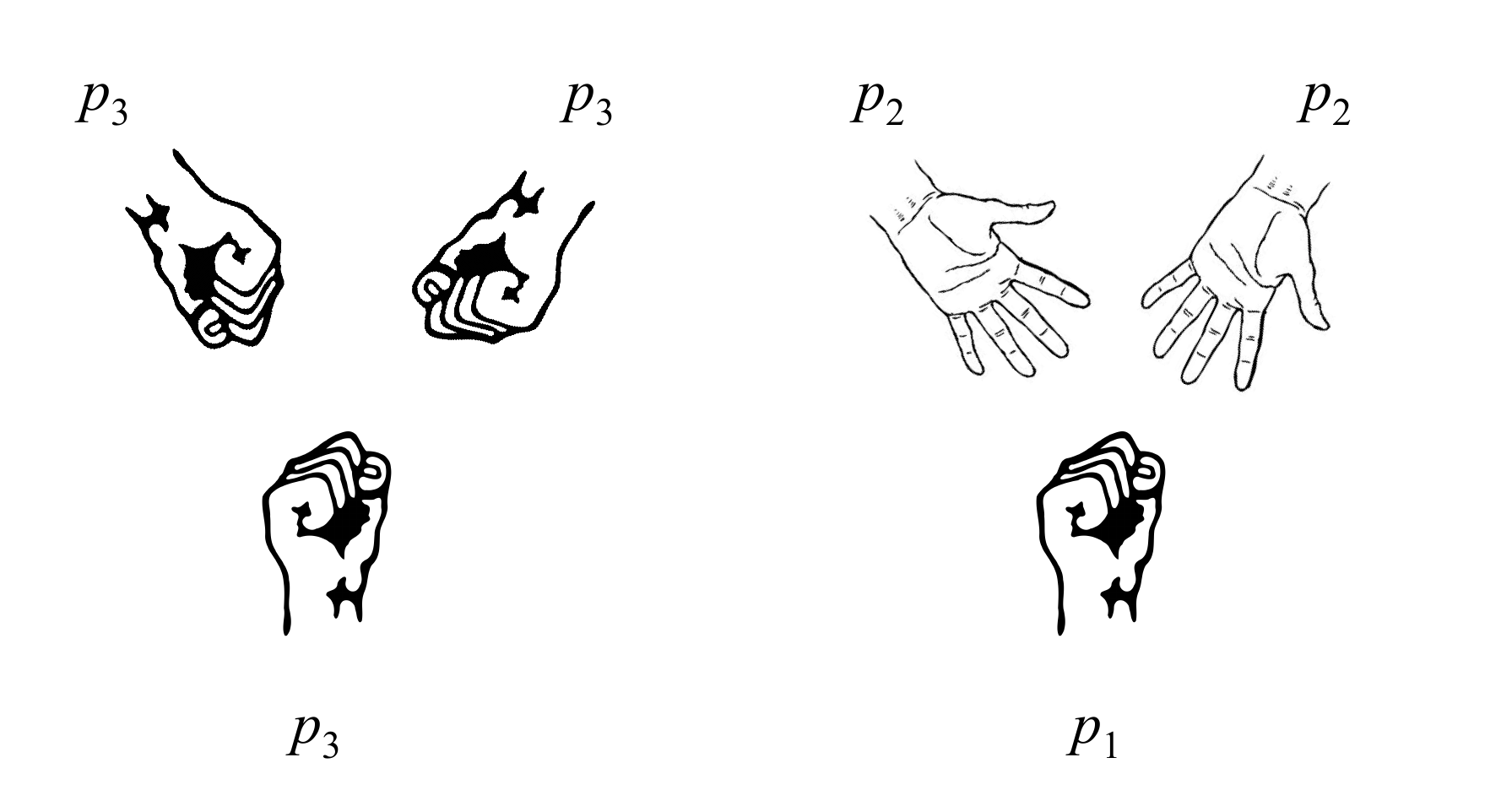}
\end{center}\centering
\caption{A simple three-player game exhibiting independence and influence, for  various values of $(p_1, p_2, p_3)$; including 
$(1,\frac{1}{2}, \frac{1}{3})$ from \cite{SS}, and $(3^{-1}, 3^{-2}, 3^{-3})$ from Section~\ref{explicit2}; see text for details.}
\label{hands}
\end{figure}

Our main results assume the system has some symmetry, as explained at
the beginning of Section~\ref{sec:polynomials} and we define three 
spaces in this context: $\Inf_n$, $\Ind_n$ and $\GST_n = \Inf_n\cap
\Ind_n$. These spaces are formally defined in
Section~\ref{sec:polynomials} but, in short, are the set of
probabilities for the fully symmetric system which lead to influence,
independence and both, respectively.   

We fully analyze the case $n = 3$ (Section~\ref{sec:polynomials}), we
establish a useful equivalence relation on $\Ind_n$
(Section~\ref{sec:geometry}), we show that $\Ind_n$ is contractible
(Section~\ref{sec:topology}) but not convex
(Section~\ref{sec:geometry}), and that $\GST_n$ is neither.   

We establish a characterization (Proposition~\ref{prop:influence}) for the
system to be in $\Inf_n$.  We show, via a quadratic form and 
its Hessian matrix, that $\GST_n$ contains infinitely many points for any $n\geq
3$.  We use this structure to investigate the topology and geometry of the space $\GST_n$, with a
main objective being to determine whether or not it is connected.  We show that $\GST_n$
is disconnected for $n = 3, 4$ and connected when $n\geq 8$ in 
Theorem~\ref{gstConnected} and Corollary~\ref{cor:connected}. The remaining cases where $n=5,6,7$ seems an  interesting 
question for future study.  

Our results involve an interplay of
linear algebra, analysis, combinatorics and topology, including some
classical results in these fields, such as Sylvester's Inertia
Theorem, Alexander Duality and Smith's theorem on periodic maps.

\section{Formal Setup}\label{sec:background}

We begin by giving the formal set-up of the system of causes and
effects, and proceed to provide formal definitions of influence,
and conditional independence.   

Let $E_1, \ldots, E_n$ and $C_1, \ldots, C_n$ be random variables with
two possible states (also called `dichotomous'), labeled throughout
this paper as $0$ and $1$ (although our results do not depend on this).  
We assume that the $C_i$ are
(mutually) independent, and each event $E_j$ depend on the outcome of the
events $C_i$; accordingly we call the $C_i$ {\em causes} and the $E_j$
{\em effects}.  To simplify notation, we write conditional probabilities of the form
 $\PP(E_i=1|*)$ more simply as $\PP(E_i| *)$ (i.e. $E_i=1$ is the event that $E_i$ `occurs'). 
 The model we study makes the following assumptions: 

\bigskip

\begin{itemize}

\item[{\bf (A1)}] The causes are (mutually) independent, with 
$\PP(C_i = 1) =r$  for some $0<r<1$.

\item[{\bf (A2)}] The effects are conditionally independent, given the joint outcome of the causes.
\begin{equation}\label{eq:probE}
\PP(E_i|  \bigwedge_{j=1}^n C_j = x_j) =  \begin{cases} p_k, & x_i = 0, \text{ and } k \text{
    total causes are in state } 0;\\
q_k, & x_i = 1, \text{ and } k \text{ total causes are
  in state } 1.\end{cases}\notag
\end{equation}

\end{itemize}

\bigskip

Property (A2) states that the probability 
of $E_i$ depends only on the number of causes that are in the same
state as $C_i$.  Flowers often seem to flower with some dependence on
the number of other flowers which have also flowered.  In the tequila
example, $p_1 = q_1 = 1$, $p_2 = q_2 = 1/2$ and $p_3 = q_3 = 1/3$.

In this paper we will mostly deal with the case where $p_k=q_k$ for all $k$, and $r=1/2$ (the fully-symmetric (or GST) model), but
it is helpful to pose the problem more generally.

\subsection{Influence and Independence}

While the set-up we explore has the same number of causes as effects,
we give the definitions here  for arbitrary
numbers of causes and effects.  
\begin{defin}\cite{SS}\label{def:influence}
A set of $k$ causes \emph{influences} a set of $m$ effects if 
for each cause $C_i$, there exists as least one assignment of states
for the remaining $k -1$ causes, such that some change in the state of
$C_i$, while holding the values of the remaining $k-1$ causes fixed,
changes the probability of at least one state of each of the $m$
effects. 
\end{defin}
The influence condition is equivalent to the requirement
that none of the causes can be eliminated for any effect -- that is,
for each $i$,  there is no proper subset $J$ of $\{1, \ldots, k\}$ for which 
$\PP(E_i|\bigwedge_{j=1}^k C_j = x_j)$ can be written 
as a function of $(x_j: j \in J)$, for all $(x_1, \ldots, x_k)$.     

We also study probabilistic independence.  Recall that two random variables
$X$ and $Y$ are independent with respect to a third random
variable $Z$ if and only if $\PP(X \wedge Y | Z ) = \PP(X|Z)\PP(Y|Z)$.  In the
language of causality and graphical models we would say that $Z$
\emph{screens off} $X$ from $Y$.  This language is natural when
looking at graphical models and, to be consistent with that literature,
we will use this phrasing as well. 

The {\em independence condition} is then the  requirement that each cause 
screens off each effect from any other effect.

For example, in the tequila drinking game, any cause $C_k$ screens off
any pair of effects as $\PP(E_i \wedge E_j\mid C_k) = \PP(E_i\mid C_k) \PP(E_j \mid
C_k)$.  However, the reason this example is of interest in ~\cite{SS} is because 
any pair of causes $(C_{k_1}, C_{k_2})$ do not screen $E_i$ from $E_j$
for any pair $(E_i, E_j)$ and yet the set of all three causes
screens off any pair of events.   This provides a contrast to what happens when $n=2$.  
In that case, Theorem 2 of \cite{SS} shows that  neither of two dichotomous causes can screen off $E_1$ from $E_2$ (i.e. the
independence condition fails) whenever the  two  causes:
\begin{itemize}
\item[(a)]   have non-zero joint probability for any combination of states,
\item[(b)] together screen off $E_1$ from $E_2$, and 
\item[(c)] each influence $E_1$ and $E_2$.
\end{itemize}

We might also wonder whether, when $n\geq 3$,  we can strengthen the
independence condition to apply when we condition on more than one
cause. However, there is a limit to the extent to which we can do this if
we wish to also maintain influence, due to the following result, which
follows directly from Corollary 2 of \cite{SS}. 

\begin{prop}
\label{SSresult}
For any model that satisfies (A1), (A2), influence and independence, any
two effects are dependent once we specify the values of  any subset of
the causes of size $n-1$.
\end{prop}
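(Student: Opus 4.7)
The plan is to reduce, by conditioning on $n-1$ causes, to a two-variable analog of the $n=2$ situation and then invoke Corollary~2 of \cite{SS}. Fix a subset $S \subset \{1, \ldots, n\}$ with $|S| = n - 1$, let $k$ be the unique index outside $S$, and choose two distinct effects $E_a, E_b$. Arguing by contradiction, I would assume $E_a \pperp E_b | (C_\ell)_{\ell \in S}$.

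For each assignment $s$ to the causes in $S$, I would pass to the conditional distribution given $S = s$. In this conditional world only $C_k$ remains random among the causes, and by (A1) it takes each of its two states with positive probability. Property (A2) gives $E_a \pperp E_b | S{=}s,\,C_k$, and the assumption gives $E_a \pperp E_b | S{=}s$. Setting $\alpha_c = \PP(E_a | S{=}s,\,C_k{=}c)$ and $\beta_c$ similarly, one then checks the identity
\[
\PP(E_a \wedge E_b | S{=}s) - \PP(E_a | S{=}s)\PP(E_b | S{=}s) = r(1-r)(\alpha_0 - \alpha_1)(\beta_0 - \beta_1),
\]
which together with the assumption forces $(\alpha_0 - \alpha_1)(\beta_0 - \beta_1) = 0$. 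Hence for every $s$ at least one of $E_a, E_b$ is independent of $C_k$ given $S = s$.

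The final step is to contradict the influence hypothesis. Using the symmetric form of (A2), independence of $E_a$ from $C_k$ given $S = s$ translates to an equation of the shape $p_{m+1} = p_m$ or $q_{m+1} = q_m$, with $m$ determined by the count profile of $s$. Letting $s$ vary over $\{0,1\}^{n-1}$ yields a system of equations on $(p_m)$ and $(q_m)$ which, after a short case analysis on whether $a, b, k$ coincide and on the counts of $0$s and $1$s in $s$, forces these sequences to be constant on enough indices that no cause can influence either effect. This is the content of Corollary~2 of \cite{SS} applied in the reduced conditional distribution.

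The main obstacle lies in this last combinatorial step: one must produce a single assignment $s$ that simultaneously witnesses the influence of $C_k$ on both $E_a$ and $E_b$, and one must track what happens in the degenerate case $a = k$ or $b = k$ where $C_a$ (or $C_b$) is not fixed by $s$. Corollary~2 of \cite{SS} handles exactly this bookkeeping, so once the reduction in the previous paragraph is in place the contradiction follows immediately.
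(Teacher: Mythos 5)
The paper itself offers no argument for this proposition beyond the sentence that it ``follows directly from Corollary 2 of \cite{SS}'', so your write-up is already more explicit than the source. Your reduction is correct and is surely the intended mechanism: by (A1), conditioning on an assignment $s$ of the $n-1$ causes in $S$ leaves $C_k$ with its unconditional two-point distribution; the factorization $\PP(E_a\wedge E_b\mid S{=}s, C_k{=}c)=\alpha_c\beta_c$ is exactly the conditional-independence part of (A2); and the covariance identity you display then shows that screening-off by $S$ forces $(\alpha_0-\alpha_1)(\beta_0-\beta_1)=0$ for every $s$. That part is sound and needs no modification.

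The weakness is in your final paragraph, in two respects. First, the combinatorial claim you sketch is not correct as stated: writing out $(\alpha_0-\alpha_1)(\beta_0-\beta_1)=0$ over all $s$ (for $a,b,k$ distinct and $n\geq 4$) yields only $p_2=\cdots=p_n$ and $q_2=\cdots=q_n$, leaving $p_1$ and $q_1$ unconstrained; a model with $p_1\neq p_2$ still satisfies the pairwise influence condition $(I_2)$, so these equations alone do not ``force the sequences to be constant on enough indices that no cause can influence either effect''--- one would additionally have to invoke the independence hypothesis to close that route. Second, and this is the repair, the obstacle you flag (producing a single $s$ that witnesses influence on both effects simultaneously) is not an obstacle under the paper's Definition~2.1: a set of causes influences a set of effects only if, for each cause $C_k$, there is \emph{one} assignment of the remaining causes under which flipping $C_k$ changes the probability of at least one state of \emph{each} effect. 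For dichotomous effects this hands you a single $s^{*}$ with $\alpha_0(s^{*})\neq\alpha_1(s^{*})$ and $\beta_0(s^{*})\neq\beta_1(s^{*})$, which contradicts your identity at $s=s^{*}$ outright --- with no case analysis on the functional form in (A2), no special handling of $a=k$ or $b=k$, and no appeal to Corollary~2 of \cite{SS}. With that substitution for your last paragraph, your argument is complete and self-contained.
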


\section{The fully symmetric (GST) model: structure of the probabilities}\label{sec:polynomials}

The symmetric setting where $p_k=q_k$ and $r_k = 1/2$ for all $k \in
\{1,\ldots, n\}$ is of particular interest, as it is tractable and
leads to some interesting results when we couple influence with
independence.  We call the model where $p_k=q_k$ and $r_k = 1/2$ the
{\em generalized symmetric tequila} (GST) setting, as it generalizes
the tequila example in~\cite{SS}, where $n=3$.  We note that
taking $r_k=1/2$ is the natural choice for symmetric games where it is
beneficial to each player play a minority action (for example, if
$p_k=q_k$ is decreasing with $k$), as this provides a Nash equilibrium
strategy. 

We explore the
case $n=3$ further to characterize all the solutions satisfying
influence and independence, before turning to general values of
$n$ as it serves to further understand the example in~\cite{SS}, it
serves as a `boundary' example for larger $n$ and we return to this
example throughout the text.  

Firstly, notice that in the GST setting, $\PP(E_i |C_j = x)$ takes the
same value for each choice of $i, j$ and 
$x$ (this probability is given formally in the proof of
Proposition~\ref{psiprop}).  In particular, $E_i$ and $C_j$ are (pairwise)
independent, for 
any pair $i,j$ (including $i=j$).  
If influence applies then $E_i$ `depends on' $C_j$ (and the other
causes) but this does not translate through to probabilistic independence.

A second basic observation in the GST setting is that symmetry gives the following:
\begin{align}
\PP(E_i) = & \PP(E_i\mid C_k = 0)\PP(C_k = 0) + \PP(E_i \mid C_k =
1)\PP(C_k = 1)\\
 = & \PP(E_i\mid C_k = 0)\frac{1}{2} + \PP(E_i \mid C_k =
0)\frac{1}{2} = \PP(E_i\mid C_k = 0).\notag
\end{align}
Therefore, effects $E_i$ and $E_j$ are independent
if and only if any single cause $C_k$
screens off the two effects.  

In the GST setting,  the conditions (A1) and (A2), coupled with influence
and independence, can be stated more succinctly as:

\begin{itemize}
\item[{\bf (i)}] The causes represent independent tosses of a fair coin;
\item[{\bf (ii)}] The effects are mutually (probabilistically)
  independent once we specify the states of all the causes; 
\item[{\bf (iii)}]  The probability of $E_i$ depends (exactly) on the
  number of causes that take the same value as $C_i$;  
\item[{\bf (iv)}]  Each pair of effects is (probabilistically) independent;
\item[{\bf (v)}]  Each cause can influence each effect.
\end{itemize}

\subsection{The cases $n=2$ and $n=3$}\label{smalln}
In the case where $n=2$, it is easy to verify  that any process that
satisfies properties (i)--(iv) must have $p_1=p_2$ and so must fail
to satisfy the influence condition (v). 

The case where $n=3$ is more interesting. We study independence by studying the
following equation, which follows from direct computation or 
Eqn.~(\ref{eq:gstIndependence}),  assuming $p_k = q_k$ and $r = 1/2$.  
\begin{align}\label{nIs3}
\PP(E_i\mid C_j = 0)^2 = \bigg(\frac{1}{16}\bigg)(p_3 + 2p_2 + p_1)^2 &
= \bigg(\frac{1}{4}\bigg)(p_3^2 + p_2^2 + 2p_2p_1) = \PP(E_i, E_j\mid
C_j =0)\notag\\
\frac{p_1^2}{16} - \frac{p_2p_1}{4} + \frac{p_3p_1}{8} - \frac{3p_3^2}{16} + \frac{p_2p_3}{4} & = 0\notag\\
\frac{1}{16}(p_1-p_3)(p_1-4p_2+3p_3) & = 0
\end{align}
Notice that $p_1 = 1$, $p_2 = 1/2$, $p_3 = 1/3$ is a solution to the
final equation which corresponds to the solution presented for the
original tequila game in \cite{SS}.  Also observe that the space of
probabilities leading to independence consists of two planes, as shown in
Fig.~\ref{fig:n3planes}. 

\begin{figure}[ht]
\begin{center}
\includegraphics[scale=.55]{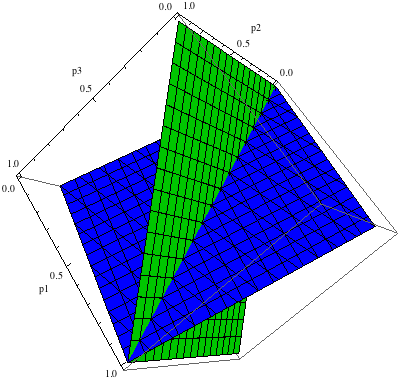}
\end{center}\centering
\caption{The plane $p_1 = p_3$ (green) where influence fails,  
and the plane $p_1-4p_2 + 3p_3 = 0$ (blue) where independence holds.} \label{fig:n3planes}
\end{figure}

Further, any solution with $p_1=p_3$ corresponding to the
vanishing of the first term $(p_1-p_3)$ in Eqn.  (\ref{nIs3})  
fails to satisfy the influence property.   This is an easy example to
work through but also follows from the more general
Proposition~\ref{prop:influence} below.  

The intersection of the two planes is $p_1 = p_2 = p_3$, where
influence clearly fails.  For the remaining points on the plane
$p_1-4p_2+3p_3 = 0$, $p_1\neq p_2\neq p_3$ which implies influence
(again easy to work through or use Proposition~\ref{prop:influence} below).
Therefore the space of 
probabilities satisfying both influence and independence for $n = 3$
consists of two connected pieces formed by removing the line $p_1 =
p_2 = p_3$ from the plane $p_1-4p_2+3p_3 = 0$ (the fact that this
space is disconnected also follows from Theorem~\ref{gstConnected} and
Corollary~\ref{cor:connected}).

\subsection{Characterizing influence}
For the fully symmetric model we can characterize when the system has
influence. First, however, a brief discussion of influence is
useful. We say a particular cause $C_j$ influences a particular effect
$E_i$ if there exists an assignment of states for the remaining $n -1$
causes such that changing the state of $C_j$ changes the probability
of at least one state of $E_i$.  
We might consider two types of influence:
\begin{itemize}
\item[($I_1$)] For every $1\leq i\leq n$,  the cause $C_i$ influences the
  effect $E_i$.  
\item[($I_2$)] For every $1\leq i,j\leq n$, the cause $C_i$ influences the
  effect $E_j$.  
\end{itemize}

The statement ($I_2$) matches Definition~\ref{def:influence} and is stronger than ($I_1$).
However, in thinking about applications, like the flowers blooming
early versus late, we are largely concerned with the flower's cause
influencing its own effect which is likely to be subject to natural 
selection.  In the symmetric case, these two types of influence are
equivalent, which  we establish in the next proposition, along with a
characterization of influence in terms of the probabilities $p_i$.  

\begin{prop}\label{prop:influence}
Assume the GST setting, so $r = 1/2$ and $p_i = q_i$.  Then
the following are equivalent:
\begin{itemize}
\item[(i)] The system satisfies {\rm ($I_1$)};
\item[(ii)] The system satisfies {\rm ($I_2$)};
\item[(iii)] There exists $s\in[n]$ such that $p_s\neq p_{n-s+1}$. 
\end{itemize}
\end{prop}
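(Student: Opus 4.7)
The plan is to prove the cycle $(ii) \Rightarrow (i) \Rightarrow (iii) \Rightarrow (ii)$. The first implication is immediate from the definitions, since instantiating $(I_2)$ at $j = i$ recovers $(I_1)$. The substantive content lies in translating the operational notion of influence into an arithmetic condition on the sequence $p_1, \ldots, p_n$. The key preliminary computation I would record is that, under (A2) combined with the GST hypothesis $p_k = q_k$, one has $\PP(E_j \mid \bigwedge_\ell C_\ell = x_\ell) = p_m$, where $m = |\{\ell : x_\ell = x_j\}|$ is the number of causes matching $C_j$'s state. All three equivalences are then read off from how $m$ transforms under a single flip of $x_i$, distinguishing the cases $i = j$ and $i \neq j$.

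For $(i) \Leftrightarrow (iii)$: when $i = j$, flipping $C_i$ from state $0$ to state $1$ turns a matching count of $k$ (the number of causes originally in state $0$, including $C_i$) into a matching count of $n - k + 1$ (the causes now in state $1$, including the flipped $C_i$). Consequently the conditional probability transitions from $p_k$ to $p_{n-k+1}$, so $C_i$ influences $E_i$ precisely when there exists $s \in [n]$ with $p_s \neq p_{n-s+1}$. Symmetry across $i$ in the GST setting then yields both directions in tandem.

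For $(iii) \Rightarrow (ii)$: the hypothesis $p_s \neq p_{n-s+1}$ already forces the sequence $p_1, \ldots, p_n$ to be nonconstant, so there exist consecutive indices $t, t+1$ with $p_t \neq p_{t+1}$. For any ordered pair $i \neq j$, I would then fix the state of $C_j$ arbitrarily, set $C_i$ to match $C_j$, and configure the remaining $n - 2$ causes so that exactly $t - 1$ of them match $C_j$. Flipping $C_i$ now shifts the matching count for $C_j$ from $t + 1$ to $t$, toggling $\PP(E_j \mid \cdots)$ between the distinct values $p_{t+1}$ and $p_t$, which is precisely $(I_2)$ for the pair $(i,j)$.

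The main obstacle is purely careful bookkeeping --- tracking that in the $i = j$ case the matching count undergoes the palindromic shift $k \mapsto n - k + 1$, whereas in the $i \neq j$ case it undergoes only the adjacent shift $m \mapsto m \pm 1$, and confirming in the latter case that the configuration of the $n-2$ uninvolved causes can always be chosen to land the count at the prescribed $t$. Once these two shift rules are isolated from one another, the three equivalences fall out uniformly.
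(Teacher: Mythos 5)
Your proposal is correct and follows essentially the same route as the paper: the same implication cycle, the same reduction of influence to the palindromic condition $p_k \neq p_{n-k+1}$ via the matching-count computation for $i=j$, and the same trick of extracting consecutive indices $t, t+1$ with $p_t \neq p_{t+1}$ and configuring $t-1$ of the remaining causes to handle $i \neq j$. Your bookkeeping in terms of the matching count $m$ (rather than the paper's count of zeros among the other causes) is if anything slightly cleaner, but it is the identical argument.
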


\begin{proof}
We argue that (i) $\Rightarrow$ (iii) $\Rightarrow$ (ii) $\Rightarrow$
(i).  If a system satisfies ($I_2$), it obviously
satisfies ($I_1$), so (ii) $\Rightarrow$ (i). 

((i) $\Rightarrow$ (iii))  We prove the contrapositive.  Assume that $p_s =
  p_{n-s+1}$ for all $1\leq s\leq n$.
  Then 
$$\PP(E_i \mid C_i = 0 \bigwedge_{j\neq i} C_j = x_j) = p_{k+1} =
  p_{n-k} = \PP(E_i\mid C_i = 1 \bigwedge_{j\neq i} C_j = x_j),$$
  where $k$ is the number of zeros occurring in the sequence 
$(x_j:  j \neq i)$.  Therefore $C_i$ has no influence on $E_i$ and the
  system fails ($I_1$).     

((iii) $\Rightarrow$ (ii))  Suppose that $p_s\neq p_{n-(s+1)}$ for some
$s\in [n]$.  As above, since $$\PP(E_i \mid C_i = 0
\bigwedge_{j\neq i} C_j = x_j) = p_{k+1}\neq p_{n-k} = \PP(E_i\mid C_i
= 1 \bigwedge_{j\neq i} C_j = x_j),$$ where 
  $k$ is the number of zeros occurring in the sequence $(x_j:  j \neq
i)$,  $C_i$ influences $E_i$.  Observe that if $p_s\neq p_{n-(s+1)}$ for some
$s\in [n]$, there must exist some $t\in[n]$ such that $p_t\neq
p_{t+1}$.  Let $j\neq i\in [n]$.  Set $x_k = 0$ for any $t-1$
  values of $k\neq i,j$.  Then 
$$\PP(E_i \mid C_i = 0, C_j = 0, \bigwedge_{k\neq i,j} C_j = x_j) = p_{t+1} \neq
  p_{t} = \PP(E_i \mid C_i = 0, C_j = 1, \bigwedge_{k\neq i,j} C_j =
  0).$$  Therefore each $C_j$ influences each $E_i$ for all $i,j\in
  [n]$ and the system satisfies ($I_2$).  

\end{proof}

To aid in further discussions, set $\Inf_n$ to be the set of points 
${\bf p}\in [0,1]^n$ such that the system has influence.  

\subsection{Characterizing independence} 
We continue to assume the GST setting, that is $r = 1/2$ and $p_i = q_i$.  
For the vector ${\bf p} = (p_1,p_2, \ldots, p_n)$, let 
\begin{equation}\label{eq:gstIndependence}
\psi({\bf p})=\bigg(\frac{1}{2^{n-1}}\sum_{k = 0}^{n-1}\binom{n-1}{k}p_{k+1}\bigg)^2 - \frac{1}{2^{n-1}}\sum_{k = 0}^{n-2}\binom{n-2}{k}(p_{k+2}^2 + p_{k+1}p_{n-(k+1)}).
\end{equation}

The function $\psi$ allows us to characterize independence as follows.
\begin{prop}
\label{psiprop}
The effects are pairwise independent (equivalently, each pair of effects is screened off by any cause) if and only if
$\psi({\bf p}) = 0.$
\end{prop}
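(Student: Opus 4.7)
The plan is to compute both sides of the pairwise independence identity $\PP(E_i \wedge E_j) = \PP(E_i)\PP(E_j)$ directly from (A1) and (A2) in the GST setting, and then observe that the difference is exactly $\psi(\mathbf{p})$. Before calculating, I would note that the two formulations in the statement (pairwise independence and screening-off by any single cause) are already equivalent: by the $0 \leftrightarrow 1$ symmetry of the joint distribution (which is preserved because $r=1/2$ and $p_k=q_k$), we have $\PP(E_i \wedge E_j \mid C_k = 0) = \PP(E_i \wedge E_j \mid C_k = 1) = \PP(E_i \wedge E_j)$, and the analogous statement for the marginals is already established in the preceding observation. Hence both conditions reduce to $\PP(E_i \wedge E_j) = \PP(E_i)\PP(E_j)$.

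To evaluate $\PP(E_i)$, I would apply the law of total probability over the $2^n$ joint states $\mathbf{x}$ of the causes, each carrying probability $2^{-n}$ by (A1). Splitting according to $x_i \in \{0,1\}$ and grouping by the number $\ell$ of the remaining $n-1$ causes that match $x_i$, each such stratum has size $\binom{n-1}{\ell}$ and contributes $p_{\ell+1}$ (or $q_{\ell+1}=p_{\ell+1}$). The two halves coincide and sum to $\PP(E_i) = \frac{1}{2^{n-1}} \sum_{k=0}^{n-1} \binom{n-1}{k} p_{k+1}$, which is precisely the quantity squared in the first term of $\psi(\mathbf{p})$.

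For $\PP(E_i \wedge E_j)$ I would use (A2) to factor $\PP(E_i \wedge E_j \mid \mathbf{x}) = \PP(E_i \mid \mathbf{x})\PP(E_j \mid \mathbf{x})$ and then expand the unconditional probability as a weighted sum. The natural stratification is by the pair $(x_i,x_j) \in \{0,1\}^2$ together with the count $\ell$ of zeros among the remaining $n-2$ causes, each stratum having size $\binom{n-2}{\ell}$. The two diagonal cases $(0,0)$ and $(1,1)$ each contribute $p_{\ell+2}^{2}$ (the $(1,1)$ case requires invoking $q=p$ together with a reindexing that counts ones instead of zeros), while the two off-diagonal cases $(0,1)$ and $(1,0)$ each contribute $p_{\ell+1}\,p_{n-\ell-1}$, since one effect sees $\ell+1$ matching causes while the other sees $n-\ell-1$. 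Collecting the factor of $2$ from pairing each case with its symmetric partner and using $n-\ell-1 = n-(k+1)$ yields exactly the second summand of $\psi(\mathbf{p})$.

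Combining the two computations gives $\psi(\mathbf{p}) = \PP(E_i)\PP(E_j) - \PP(E_i \wedge E_j)$, and both directions of the equivalence follow immediately. The only genuine obstacle is the bookkeeping in the off-diagonal and $(1,1)$ cases, where the interplay between $q=p$ and the switch between counting zeros versus ones must be tracked carefully to recover the asymmetric factor $p_{n-(k+1)}$ appearing in the stated formula.
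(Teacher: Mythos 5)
Your proof is correct and follows essentially the same route as the paper's: both compute $\PP(E_i)$ and $\PP(E_i\wedge E_j)$ by binomial stratification over the cause configurations (using $r=1/2$ and $p_k=q_k$ to collapse the symmetric cases) and then identify $\PP(E_i)\PP(E_j)-\PP(E_i\wedge E_j)$ with $\psi(\mathbf{p})$. The only cosmetic difference is that you sum unconditionally over all $2^n$ cause states and pair up the four $(x_i,x_j)$ cases at the end, whereas the paper conditions on $C_1=0$ up front and splits only on $C_2$; the resulting algebra is identical.
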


\begin{proof}
The symmetry in the GST model implies that for all $i,j \in
\{1,\ldots, n\}$ 
$$\PP(E_i)= \PP(E_i \mid C_j= x) = \PP(E_1 \mid C_1 = 0).$$ 
This last  probability can be expressed as the sum
over $k=0, \ldots n-1$ of the binomial probability
($\binom{n-1}{k}2^{-(n-1)}$)  that $k$ of the causes $C_2, \ldots,
C_n$ are also in state 0, times the probability ($p_{k+1}$) of $E_1$
given this event and given that $C_1=0$.  This leads to: 
$$\PP(E_i)\PP(E_j) = \PP(E_1|C_1=0)^2 = \bigg(\frac{1}{2^{n-1}}\sum_{k = 0}^{n-1}\binom{n-1}{k}p_{k+1}\bigg)^2.$$
Similarly, for any $i \neq j$ 
$$\PP(E_i \wedge E_j) = \PP(E_1\wedge  E_2|C_1=0).$$
We consider two cases here: either $C_2 =0$ or $C_2=1$, each of which
has probability 1/2. In the first case,  $\PP(E_1 \wedge E_2|C_1=0, C_2=0)$
can be expressed as the sum over all $k=0, \ldots n-2$ of the binomial
probability ($\binom{n-2}{k}2^{-(n-2)}$)  that $k$ of the causes $C_2,
\ldots, C_n$ are also in state 0, times the probability ($p_{k+2}^2$)
of $E_1$ and $E_2$ given this event and given that   $C_1=0$ and
$C_2=0$.  This leads to the first term on the right-hand side of the
expression for $\psi({\bf p})$. An analogous argument for the case
where $C_2=1$ leads to the second term on the right. Notice that the
factor $1/2$ $(=\PP(C_2=0)=\PP(C_2=1)$) gives the required power of $2$
as: $\frac{1}{2} \times 2^{-(n-2)} = 2^{-(n-1)}$. 

\end{proof}

Again, to aid our discussion set 
$$\Ind_n := \{{\bf p}\in [0,1]^n\mid\psi({\bf p}) = 0\};$$ 
that is $\Ind_n$ is the set of all points so
 that the system has independence.  Finally, we set 
$$\GST_n := \Ind_n\cap\Inf_n.$$  While our discussion is entirely in the GST
setting, when talking about subsets of $[0,1]^n$ we will only use
$\GST_n$ when the system has both influence and independence.

\section{Some special points in $\GST_n$}\label{sec:points}

Before we dig deep into the geometric and topological structure of $\GST_n$, we show the
space is non-empty by explicitly establishing a few useful points in
the space, starting with $\Ind_n$ and moving on to points that are in
$\GST_n$.  

The quadratic form discussed in the next
section gives us an easy way, from details in the proof of
Theorem~\ref{gstConnected},  to
show  that there are infinitely many points in
$\GST_n$.  However, we found the following explicit
points useful for proving that both $\GST_n$ and $\Ind_n$ are not
convex. These examples also 
illustrate the challenge of trying to write down explicit points.

\subsection{Explicit points in $\Ind_n$}
If $p_i = p$ for all $i \in \{1,\ldots, n\}$ then any two events $E_i$ 
and $E_j$, where $i\neq j$,  are independent (equivalently,  they are screened
off by a single cause $C_k$ for any $ k 
\in \{1,\ldots, n\}$), i.e.  $\psi(p,p,\ldots, p) = 0$. It is
relatively easy to establish this fact explicitly, but it also follows
directly from the fact that $\psi$ is a quadratic form and $(1,\ldots,
1)$ is an eigenvector for its Hessian matrix with eigenvalue $0$ (see
Section~\ref{sec:quadForm} and Proposition~\ref{eigenvalues}).  This point
fails influence by Proposition~\ref{prop:influence}.

Furthermore, when $n$ is odd, easy computations show that
$p_i=p$ (for $i$ odd) and $p_i = p'$ (for $i$ even), where 
$0<p, p' <1$ satisfies independence.  However, this also fails the
influence requirement, since when $n$ is odd, $i$ is odd/even if and
only if $n-i+1$ is odd/even and therefore
Proposition~\ref{prop:influence} implies no influence.  
 
An alternative approach to try to achieve independence and influence
simultaneously using two parameters $p \neq p'$ is to select ${\bf p}$
so that influence applies, and then attempt to enforce independence. For
example, if we select some $j \in \{1,\ldots, n\}$ where $j \neq n/2$
and define ${\bf p}$  by setting: 
 $$p_i = \begin{cases}
 p, & \mbox{ if } i \neq n-j;\\
 p', & \mbox{ if } i = n-j;
 \end{cases}
 $$
then it is clear from  Proposition~\ref{prop:influence} that influence
holds.   However, it is easy to show that independence fails in this
case, illustrating how challenging it can be to  write down
points in $\GST_n$ explicitly.  

\subsection{Explicit points in $\GST_n$ with all coordinates non-zero}
\label{explicit2}
In the next two subsections we explicitly compute points in $\GST_n$
which are particularly useful for showing that $\Ind_n$ is not convex. 
For the first set of points set $p_k = \theta^k$ for some 
$0 < \theta < 1$.  Then $p_i\neq p_j$ for all $i\neq j$, which   
implies influence.  We claim there exists at
least one $\theta$ that implies independence of effects. 
Since we are in the GST setting we use Eqn.~(\ref{eq:gstIndependence})
and substitute $\theta^k$ for $p_k$ to obtain:
\begin{align}\label{eq:theta1}
\psi({\bf p})
= & \bigg(\frac{1}{2^{n-1}}\sum_{k=0}^{n-1}\binom{n-1}{k}\theta^{k+1}\bigg)^2
- \frac{1}{2^{n-1}}\sum_{k = 0}^{n-2}\binom{n-2}{k}((\theta^{k+2})^2 +
\theta^{k+1}\theta^{n-(k+1)})\notag\\
= & \bigg(\frac{1}{2^{n-1}}\theta (1+\theta)^{n-1}\bigg)^2
- \frac{1}{2^{n-1}}(\theta^4(1+\theta^2)^{n-2} + 2^{n-2}\theta^n)\notag\\
= &\frac{1}{2^{2n-2}}\theta^2\bigg((1+\theta)^{2n-2} -
2^{n-1}\theta^2(1+\theta^2)^{n-2} - 2^{2n-3}\theta^{n-2}\bigg).
\end{align}
To determine $\theta$ such that one cause screens off two events we need to
determine when Eqn.~(\ref{eq:theta1}) is equal to zero.  
Of course, $\theta = 0$ is a solution but it fails to satisfy influence, by
Proposition~\ref{prop:influence}.  So we study the equation  
\begin{equation}\label{eq:theta}
(1+\theta)^{2n-2} - 2^{n-1}\theta^2(1+\theta^2)^{n-2} -
  2^{2n-3}\theta^{n-2} = 0.
\end{equation}

For $n = 3$, we can explore the structure in two ways, using either 
Eqn.~(\ref{nIs3}) or Eqn.~(\ref{eq:theta}).  In the case of Eqn.~(\ref{nIs3}),
the polynomial factorizes as 
$$\theta^2(1-\theta^2)(1-4\theta+3\theta^2)  = 0,$$ 
and without the $\theta^2$ in the case of Eqn.~(\ref{eq:theta}).  The
solutions $\theta = 0$ and $\theta = 1$ correspond to no influence
by Proposition~\ref{prop:influence}, and $\theta = -1$ is not
stochastic. That leaves $1-4\theta+3\theta^2 = 0$ which factorizes as
$(1-3\theta)(1-\theta)$, showing two solutions: $\theta = 1$ and $\theta
= 1/3$.  Therefore, for $n = 3$, there is one value of $\theta$ which
is stochastic and all the probabilities involved are distinct, so 
the causes influence the effects (i.e.  the system satisfies influence). Note
that $\theta = 1/3$ provides a different point in $\GST_n$ than that used
in~\cite{SS}.  

Set $f(\theta) = (1+\theta)^{2n-2} - 2^{n-1}\theta^2(1+\theta^2)^{n-2} -
  2^{2n-3}\theta^{n-2}$.  Notice that 
\begin{align*}
f(0) & = 1,\\
f(1) & = 2^{2n-2} - 2^{2n-3}- 2^{2n-3} = 0. 
\end{align*}
We use these facts, the behavior of $f(1/n)$ as $n$ tends to
infinity, and the Intermediate Value Theorem, to study the zeros of
$f(\theta)$.  We proceed with a study of $f(1/n)$ as $n$ tends to
infinity.  
\begin{align*}
f\bigg(\frac{1}{n}\bigg) = & 
 \bigg(1+\frac{1}{n}\bigg)^{2n-2} -
 2^{n-1}\frac{1}{n^2}\bigg(1+\frac{1}{n^2}\bigg)^{n-2} -
 2^{2n-3}\frac{1}{n^{n-2}}  
\end{align*}
The first term tends to $e^2$ as $n$ becomes large.  The last term tends
to $0$ as $n$ tends to infinity. The
middle term tends to $-\infty$ since $2^{n-1}\gg \frac{1}{n^2}$ and
$\bigg(1+\frac{1}{n^2}\bigg)^{n-2}$ tends to 1.  Thus for large $n$,
$f(1/n) < 0$.  Since $f(0) = 1$, the Intermediate
Value Theorem establishes that $f$ has a root between $0$ and $1/n$
for all large $n$.  We note that we can determine numerically that
$f(1/n) >0 $ for $n \leq 10$ and $f(1/n)<0$ for all $n \geq 11$.

A few graphs of $f(\theta)$, in Fig.~\ref{fig:ftheta},  are instructive.  We use a window that
makes the roots easy to observe on the interval $[0,1]$, but this cuts
off some of the extreme parts of the curves as $n$ increases. First we
notice that a root between $0$ and $1/n$ appears in the graph of
$f(\theta)$ for $n = 10$, but the argument above only guarantees it
for $n \geq 11$ and it does not appear in the graph for $n = 9$ (or
smaller).  However, there is another root (and, once $n \geq 10$, two
other roots) that appear to be converging to $1$ rather than $0$ and
this root already appears  for $n\geq 3$.  

\begin{figure}
        \centering
        \begin{subfigure}[b]{0.3\textwidth}
                \centering
                \includegraphics[width=\textwidth]{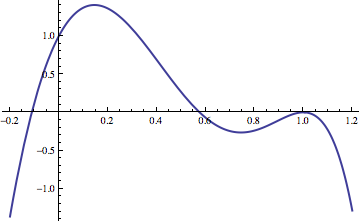}
                \caption{$n = 4$}
                \label{fig:f4}
        \end{subfigure}%
        ~\qquad 
        \begin{subfigure}[b]{0.3\textwidth}
                \centering
                \includegraphics[width=\textwidth]{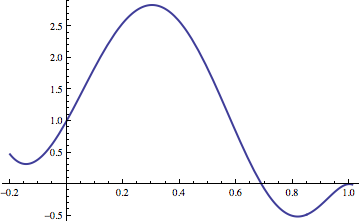}
                \caption{$n = 5$}
                \label{fig:f5}
        \end{subfigure}
        ~\qquad 
        \begin{subfigure}[b]{0.3\textwidth}
                \centering
                \includegraphics[width=\textwidth]{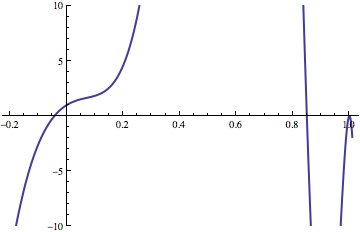}
                \caption{$n=9$}
                \label{fig:f9}
        \end{subfigure}
\qquad 
\begin{subfigure}[b]{0.3\textwidth}
                \centering
                \includegraphics[width=\textwidth]{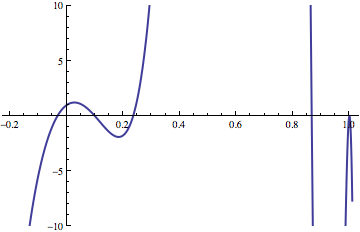}
                \caption{$n=10$}
                \label{fig:f10}
        \end{subfigure}
~\qquad 
\begin{subfigure}[b]{0.3\textwidth}
                \centering
                \includegraphics[width=\textwidth]{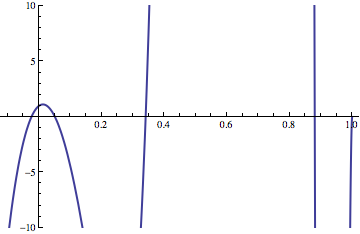}
                \caption{$n=11$}
                \label{fig:f11}
        \end{subfigure}
        \caption{Graphs of $f(\theta)$.}\label{fig:ftheta}
\end{figure}

\vspace{0.2cm}

In summary, there exists points in $\GST_n$ for any $n\geq 11$.  For $n\leq 10$ we
can explore the system numerically to determine that $f$
still has a root in $(0,1)$. We use these points in our discussion of
convexity in Section~\ref{sec:geometry}.

\subsection{Explicit points in $\GST_n$ with many zero coordinates}\label{bdryPoints}
A second way to construct explicit elements of $\GST_n$ is
to look at `boundary points'.  
\begin{prop}\label{canonicalPoint}
For any $n\geq 4$, there is exactly one value of $p_n$ such that the point ${\bf p} =
(1, 0, 0, 0, 0,\ldots, 0, p_n)$ lies in $\GST_n$.  
\end{prop}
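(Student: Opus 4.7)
The plan is to substitute the prescribed coordinates into $\psi$ of Proposition~\ref{psiprop}, exploit the sparsity to reduce the independence condition to a single quadratic in $\alpha := p_n$, extract the unique admissible root, and check that it also satisfies the influence criterion of Proposition~\ref{prop:influence}. The influence side is immediate: since $p_s = p_{n-s+1} = 0$ for every $s \in \{2,\ldots,n-1\}$ (the index $n-s+1$ stays in the same range), condition~(iii) of Proposition~\ref{prop:influence} collapses to the lone requirement $p_1 \ne p_n$, i.e., $\alpha \ne 1$. So the whole problem reduces to counting the solutions of $\psi({\bf p}) = 0$ with $\alpha \in [0,1)$.

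Next I would compute $\psi({\bf p})$ directly. The only non-zero coordinates are $p_1 = 1$ and $p_n = \alpha$, which makes each of the three subsums in $\psi$ very sparse. The sum $\sum \binom{n-1}{k} p_{k+1}$ keeps only the $k = 0$ and $k = n-1$ terms, giving $1 + \alpha$; the square sum $\sum \binom{n-2}{k} p_{k+2}^2$ keeps only $k = n-2$, giving $\alpha^2$; and the cross sum $\sum \binom{n-2}{k} p_{k+1} p_{n-k-1}$ vanishes, because both factors have indices in $\{1,\ldots,n-1\}$, the only non-zero coordinate with index in that range is $p_1$, and demanding $k+1 = 1$ and $n-k-1 = 1$ simultaneously forces $n = 2$. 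Hence
\[
\psi({\bf p}) = \frac{(1+\alpha)^2}{2^{2(n-1)}} - \frac{\alpha^2}{2^{n-1}}.
\]

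Setting this to zero gives $(1+\alpha)^2 = 2^{n-1}\alpha^2$, and since $\alpha \ge 0$ forces the positive square root, the unique candidate is $\alpha = 1/(2^{(n-1)/2} - 1)$. The hypothesis $n \ge 4$ enters decisively at this last step: it yields $2^{(n-1)/2} > 2$, which places $\alpha$ strictly in $(0,1)$ and hence secures $\alpha \ne 1$. I expect the main conceptual point worth flagging in the write-up is exactly why $n \ge 4$ rather than $n \ge 3$ is required: the sparsity argument above already works for $n \ge 3$, but substituting $n = 3$ into the closed-form formula returns $\alpha = 1$, which would violate influence; the bound $n \ge 4$ is precisely what keeps the unique independence solution strictly below $1$.
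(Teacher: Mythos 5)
Your proposal is correct and follows essentially the same route as the paper: substitute the sparse point into $\psi$, observe that only the $k=0$ and $k=n-1$ terms of the first sum and the $k=n-2$ term of the square sum survive while the cross sum vanishes, reduce to the quadratic $(1+\alpha)^2 = 2^{n-1}\alpha^2$, and identify $\alpha = 1/(\sqrt{2^{n-1}}-1)$ as the unique root in $[0,1]$, which satisfies influence precisely because $n\geq 4$ forces $\alpha<1$. The paper solves the quadratic via the quadratic formula rather than taking square roots, and your explicit appeal to Proposition~\ref{prop:influence}(iii) for the influence check is slightly more detailed than the paper's one-line assertion, but these are cosmetic differences.
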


\begin{proof}
To simplify initial computations, we let $N = 2^{n-1}$ to obtain:
\begin{align*}
\psi({\bf p}) = & \frac{1}{N^2}\bigg(\sum_{k=0}^{n-1}\binom{n-1}{k}p_{k+1}\bigg)^2 -
\frac{1}{N}\sum_{k=0}^{n-2}\binom{n-2}{k}p_{k+2}^2 -
\frac{1}{N}\sum_{k=0}^{n-2}\binom{n-2}{k}p_{k+1}p_{n-(k+1)} \\
= & \frac{1}{N^2}(1 + p_n)^2 - \frac{1}{N}(p_n^2). 
\end{align*}
Thus the quadratic formula gives 
\begin{equation*}
p_n = \frac{-2 \pm \sqrt{4 - 4(1-N)}}{2(1-N)} = \frac{-1 \pm
  \sqrt{N}}{1-N}.
\end{equation*}
Then for any $N>1$, one root lies between $0$ and $1$, namely
$\frac{-1-\sqrt{N}}{1-N} = \frac{1}{\sqrt{N}-1}$.   
The point ${\bf p} = (1, 0, 0, 0, 0,\ldots, 0,
\frac{1}{\sqrt{N}-1})$ is in  $\GST_n$, since it also satisfies influence as $1\neq
\frac{1}{\sqrt{2^{n-1}}-1}$ for any $n\geq 4$.  
\end{proof}
The computations in the proof above work for $n = 3$,
but when $n = 3$,  $\frac{1}{\sqrt{2^{3-1}}-1} = 1$.  Therefore, the point we
get, using this approach is $(1,0,1)$, which satisfies independence, 
but not influence.  Similar computations (or Remark~\ref{involution} below)
show that 
${\bf 1}-{\bf p} = (0, 1, \ldots, 1, \frac{\sqrt{N}-2}{\sqrt{N}-1})\in\GST_n$ 
as well.

\section{The quadratic form $\psi$}\label{sec:quadForm}

To understand $\GST_n$, we find it helpful to study the structure of
$\psi$ as given in Eqn.~(\ref{eq:gstIndependence}).  For example, 
the first partial derivatives of $\psi$ are zero at ${\bf p} = (1/2,
1/2, \ldots, 1/2)$.  This turns out to be one piece of evidence that
this point is special (another is that there are lots of
lines, which are mostly in $\GST_n$, passing through this point, as we
show later).  However, since $\psi$ is a quadratic form, the Hessian
matrix, denoted $H_n$,  seems to be more
helpful in our study of $\GST_n$ near  the point ${\bf p} = (1/2,
1/2, \ldots, 1/2)$ and more generally.  

To compute the
Hessian matrix we begin with the first derivative.  Throughout this
section we use $N = 2^{n-1}$ to simplify expressions. For all $i \neq
1, n$, 

\begin{equation}\label{Gradient}
\frac{\partial\psi}{\partial p_i} =
\frac{2}{N^2}\binom{n-1}{i-1}\bigg(\sum_{k=0}^{n-1}\binom{n-1}{k}p_{k+1}\bigg)
- \frac{2}{N}\bigg[\binom{n-2}{i-2}p_i +
  \binom{n-2}{i-1}p_{n-i}\bigg].  
\end{equation}
  When $i = 1$ simply remove the term
  $\frac{2}{N}[\binom{n-2}{i-2}p_i]$ and when $i = n$ remove the term
  $\frac{2}{N}[\binom{n-2}{i-1}p_{n-i}]$.  
From this the second partials are easy to compute.
\begin{equation}\label{Hessian}
\frac{\partial^2\psi}{\partial p_i\partial p_j} = \frac{2}{N^2}\binom{n-1}{i-1}\binom{n-1}{j-1} - \begin{cases}
\frac{2}{N}\binom{n-2}{i-2} & i = j \neq 1,\frac{n}{2};\\
\frac{2}{N}\binom{n-2}{i-1} & j = n-i,j\neq \frac{n}{2};\\
\frac{2}{N}\binom{n-2}{i-2} + \frac{2}{N}\binom{n-2}{i-1} & i = j = \frac{n}{2};\\
0 & {\text otherwise}.
\end{cases}
\end{equation}
Since $\psi$ is a quadratic
polynomial, the Hessian matrix is constant, as expected.  Furthermore, since $\psi$ is a quadratic form
corresponding to a symmetric matrix we label $Q_n$,  $H_n= Q_n +
Q_n^T = 2Q_n$.  Therefore, knowing $H_n$ gives us $Q_n$ as well. 

To determine for which values of $n$ the space $\GST_n$ is connected --  our main
goal -- we need several results regarding the eigenvalues and
eigenspaces of the Hessian matrix $H_n$, which we collect here.   

 \begin{prop}
\label{eigenvalues}
For all $n\geq 3$, the Hessian matrix $H_n$ has $0$ as an eigenvalue with associated eigenvector ${\bf 1}$.  
\end{prop}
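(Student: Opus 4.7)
The plan is to reduce the claim that ${\bf 1}$ is an eigenvector of $H_n$ with eigenvalue $0$ to the equivalent statement $H_n {\bf 1} = {\bf 0}$, and then to verify this via the gradient formula (\ref{Gradient}). Because $\psi$ is a homogeneous quadratic form, its gradient satisfies $\nabla\psi({\bf p}) = H_n {\bf p}$ (this is just the identity $\nabla({\bf p}^T Q_n {\bf p}) = 2Q_n {\bf p}$ combined with $H_n = 2Q_n$ from Section~\ref{sec:quadForm}). In particular $H_n {\bf 1} = \nabla\psi({\bf 1})$, so it suffices to check that $\partial\psi/\partial p_i$ vanishes at ${\bf p} = {\bf 1}$ for every $i$.

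Evaluating the expression in (\ref{Gradient}) at ${\bf p} = {\bf 1}$, the first term becomes $\frac{2}{N^2}\binom{n-1}{i-1}\sum_{k=0}^{n-1}\binom{n-1}{k}$, which equals $\frac{2}{N}\binom{n-1}{i-1}$ by the identity $\sum_{k=0}^{n-1}\binom{n-1}{k} = 2^{n-1} = N$. The subtracted term becomes $\frac{2}{N}\bigl[\binom{n-2}{i-2} + \binom{n-2}{i-1}\bigr]$, which equals $\frac{2}{N}\binom{n-1}{i-1}$ by Pascal's rule. The two contributions cancel, giving $\partial\psi/\partial p_i\big|_{{\bf 1}} = 0$. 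Adopting the standard convention $\binom{m}{k} = 0$ for $k < 0$ or $k > m$, the three piecewise branches of (\ref{Gradient}) all collapse into this single calculation: at $i = 1$ the term $\binom{n-2}{-1}$ vanishes; at $i = n$ the term $\binom{n-2}{n-1}$ vanishes; and at $i = n/2$ both $\binom{n-2}{i-2}$ and $\binom{n-2}{i-1}$ contribute as in the combined case of (\ref{Hessian}), and Pascal's rule still applies. Hence $\nabla\psi({\bf 1}) = {\bf 0}$, so $H_n {\bf 1} = {\bf 0}$.

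There is no real obstacle here: the coefficients of $\psi$ were assembled in precisely the way needed for the binomial sum identity and Pascal's rule to produce exact cancellation at ${\bf 1}$. The only care required is to handle the three boundary cases of the gradient formula uniformly, which the binomial coefficient convention dispatches cleanly. It is worth noting that the weaker observation $\psi(p, p, \ldots, p) = 0$ from Section~\ref{sec:points} yields only ${\bf 1}^T Q_n {\bf 1} = 0$ and is not by itself sufficient to conclude $Q_n {\bf 1} = {\bf 0}$, so the gradient computation above is genuinely needed.
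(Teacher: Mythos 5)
Your proof is correct and is essentially the paper's argument in different packaging: the paper verifies directly that each row sum of $H_n$ vanishes using $\sum_{j}\binom{n-1}{j-1}=2^{n-1}$ and Pascal's rule, which is exactly the computation you perform by evaluating $\nabla\psi$ at ${\bf 1}$ (since $\nabla\psi({\bf 1})=H_n{\bf 1}$ is precisely the vector of row sums). Your uniform handling of the boundary cases via the convention $\binom{m}{k}=0$ for $k<0$ or $k>m$ is a clean touch, but the substance is identical.
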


\begin{proof}
The vector ${\bf 1}$ is an eigenvector for the eigenvalue $0$ if and only if the row sums are
$0$.    The sum of the the entries in the $i^{th}$ row of
$H_n$, for $i\neq 1,n$ (it does not matter here whether $n$ is even or odd), using
Eqn.~(\ref{Hessian}),  is 
\begin{equation*}
\sum_{j = 1}^n \frac{2}{N^2}\binom{n-1}{i-1}\binom{n-1}{j-1} -
\frac{2}{N}\binom{n-2}{i-2} - \frac{2}{N}\binom{n-2}{i-1} =
\frac{2}{N}\binom{n-1}{i-1}- \frac{2}{N}\binom{n-1}{i-1} = 0.
\end{equation*}
This uses  $\sum_{j = 1}^{n}\binom{n-1}{j-1} = 2^{n-1} = N$ and
$\binom{n-2}{i-2} + \binom{n-2}{i-1} = \binom{n-1}{i-1}$.  The
arguments for $ i = 1, n$ are similar, with simpler computations.  
\end{proof}

\begin{remark}\label{Hstructure}
Observe from Eqn.~(\ref{Hessian}) that the Hessian matrix $H_n =
{\bf v}{\bf v}^T - X$, where ${\bf v}$ is the vector with $i^{th}$ entry equal to 
$\frac{\sqrt{2}}{N}\binom{n-1}{i-1}$.  The matrix $X$ has non-zero
entries on the diagonal, except for the $(1,1)$ location, which is
$0$, and there are non-zero entries on the
opposite diagonal given by $i + j = n$. 
For example, below are the matrices $X$ for $n =
4$ and $n = 5$,  in both cases scaled by multiplying by $N/2 = 2^{n-2}$. 
These two cases also illustrate the differences in $X$ for odd vs. even values of $n$.  Finally, it
is helpful to keep the shape of this matrix in mind for many of the
following arguments.   
\end{remark}

\begin{equation*}
\setlength{\arraycolsep}{1pt}
\begin{bmatrix}
0 						&         			&	0    					&					& \binom{2}{0} 	& 				& 0 \\[-5pt]
							& \ddots				&								&	\idots	&								& 				& \\[-5pt]
0 						&								& \binom{2}{0} 	& 				& 0  						& 				& 0 \\[-5pt]
							& \idots				&								&	\ddots	&								& 				& \\[-5pt]
\binom{2}{2} 	&				 				& 0 						& 				& \binom{2}{1} 	& 				& 0  \\[-5pt]
							&								&								&					&								& \ddots	& \\[-5pt]
0 						&								& 0 						&					& 0 						&					& \binom{2}{2}
\end{bmatrix}, \quad
\begin{bmatrix}
0 						& 							& 0    					&																	& 0 						&								& \binom{3}{0} 	& 				& 0 \\[-5pt]
							& \ddots				&								&																	&								&		\idots			& 							& 				& \\[-5pt]
0 						& 							& \binom{3}{0} 	& 																&  \binom{3}{1} &								& 0  						& 				& 0\\[-5pt]
							& 							&								&	\text{$\idots$\llap{$\ddots$}}	&								& 							& 							&					& \\[-5pt]
0 						& 							& \binom{3}{1} 	& 																&  \binom{3}{1} & 							&	0 						& 				& 0 \\[-5pt]
							& \idots				&								&																	&								&		\ddots			& 							& 				& \\[-5pt]
\binom{3}{3} 	& 							& 0							& 																& 0 						& 							&\binom{3}{2} 	& 				& 0 \\[-5pt]
							&  							&								&																	&								&								& 							& \ddots 	&	\\[-5pt]
0  						& 							& 0 						& 																& 0 						& 							& 0							& 			 	& \binom{3}{3}
\end{bmatrix}
\end{equation*}

\begin{lem}
The matrix $X$ has rank $n$.  
\end{lem}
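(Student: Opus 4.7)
The plan is to exploit the very sparse support pattern of $X$: each row $k$ has nonzero entries only at column $k$ (the diagonal) and column $n-k$ (the antidiagonal). Thus rows with indices $k$ and $n-k$ together have support contained in the two-element column set $\{k, n-k\}$, so after a simultaneous row and column permutation grouping each pair $\{k, n-k\}$ with $k < n-k$ together, $X$ becomes block-diagonal. Its blocks are the $2 \times 2$ blocks from those pairs, a $1 \times 1$ block at index $n$ (whose antidiagonal partner $0$ does not exist), and, when $n$ is even, an additional $1 \times 1$ block at $n/2$ (where the diagonal and antidiagonal coincide). Since permutation similarity preserves rank, it suffices to show every block is nonsingular.

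The singleton blocks are immediate: $X_{n,n} = \frac{2}{N}\binom{n-2}{n-2} = \frac{2}{N}$, and for even $n$, $X_{n/2, n/2} = \frac{2}{N}\binom{n-1}{n/2-1} > 0$. For a pair $\{k, n-k\}$ with $1 \leq k < n/2$, using $\binom{n-2}{n-k-1} = \binom{n-2}{k-1}$ and $\binom{n-2}{n-k-2} = \binom{n-2}{k}$, the corresponding $2 \times 2$ block equals, up to the common scalar $\frac{2}{N}$,
\[
B_k = \begin{pmatrix} \binom{n-2}{k-2} & \binom{n-2}{k-1} \\ \binom{n-2}{k-1} & \binom{n-2}{k} \end{pmatrix},
\]
where we adopt the convention $\binom{n-2}{-1} = 0$ to cover the $k = 1$ case (which captures $X_{1,1} = 0$). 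Then $\det B_1 = -1$ directly, while for $k \geq 2$ we have $\det B_k = \binom{n-2}{k-2}\binom{n-2}{k} - \binom{n-2}{k-1}^2$.

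The one computation worth singling out is showing $\det B_k \neq 0$ for $2 \leq k \leq n-2$, which is strict log-concavity along a row of Pascal's triangle. A direct ratio calculation gives
\[
\frac{\binom{n-2}{k-1}^2}{\binom{n-2}{k-2}\binom{n-2}{k}} = \frac{k(n-k)}{(k-1)(n-k-1)},
\]
and this exceeds $1$ because $k(n-k) - (k-1)(n-k-1) = n-1 > 0$; hence $\det B_k < 0$. Multiplying, every block has nonzero determinant, hence $\det X \neq 0$ and so $\rank X = n$. The main thing to be careful about is the bookkeeping of the block decomposition and handling the corner indices $k = 1$ and (for even $n$) $k = n/2$; the log-concavity step itself is just the one-line ratio estimate above.
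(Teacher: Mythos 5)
Your proof is correct and rests on the same structural observation as the paper's: rows $i$ and $n-i$ of $X$ interact only in columns $i$ and $n-i$, so the problem reduces to checking that each resulting $2\times 2$ block (plus the singleton blocks at $n$ and, for even $n$, at $n/2$) is nonsingular, which in both arguments comes down to the strict log-concavity inequality $\binom{n-2}{k-1}^2 \neq \binom{n-2}{k-2}\binom{n-2}{k}$. The only cosmetic difference is that you package this as a permutation to block-diagonal form and compute block determinants, whereas the paper performs the equivalent row reduction to an upper triangular matrix.
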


\begin{proof}
If $i\neq 1, n-1, n$ or, when $n$ is even, $\frac{n}{2}$, then rows $i$
and $n-i$ each have two entries in the same columns, which are $i$ and
$n-i$. To simplify the discussion, assume, without loss of generality,
that $i< \frac{n}{2}< n-i$, and we indicate row $j$ in the matrix by
$R_j$.  The entries in $R_i$ are 
$\frac{2}{N}\binom{n-2}{i-2}$ and 
$\frac{2}{N}\binom{n-2}{i-1}$, respectively, and in $R_{n-i}$ they are
$\frac{2}{N}\binom{n-2}{n-i-1} = \frac{2}{N}\binom{n-2}{i-1}$ and
$\frac{2}{N}\binom{n-2}{n-i-2}$ respectively.  The standard row
operation replacing $R_{n-i}$ with $-\frac{n-i}{i-1}*R_i +
R_{n-i}$ places a $0$ in the $i^{\mathrm{th}}$ entry in $R_{n-i}$ and
$$\frac{2}{N}\binom{n-2}{i-2}\bigg(\frac{-n+1}{(n-i-1)(i-1)}\bigg)\neq
0$$ in the $n-i^{\mathrm{th}}$ entry.  
Then swap rows $1$ and $n-1$, and observe that row $n$, and, when $n$
is even, row $\frac{n}{2}$ have only one non-zero entry in column $n$, respectively column $\frac{n}{2}$. 
Therefore
$X$ is row-equivalent to an upper triangular matrix where all the
diagonal entries are non-zero. 
\end{proof}

\begin{prop}
\label{eigenspace}
For all $n\geq 4$, the eigenspace corresponding to the eigenvalue $0$ has dimension 1.  
\end{prop}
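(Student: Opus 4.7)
My plan is to exploit the decomposition $H_n = \mathbf{v}\mathbf{v}^T - X$ from Remark~\ref{Hstructure} together with the fact, just established in the preceding lemma, that $X$ has full rank $n$. Since we already know from Proposition~\ref{eigenvalues} that $\mathbf{1}$ lies in the kernel of $H_n$, proving that the $0$-eigenspace is one-dimensional amounts to showing $\mathrm{rank}(H_n) \geq n-1$.

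First I would record the easy direction: because $H_n \mathbf{1} = \mathbf{0}$, the nullity of $H_n$ is at least $1$, so $\mathrm{rank}(H_n) \leq n-1$. Next, I would use the decomposition in the form
\[
X = \mathbf{v}\mathbf{v}^T - H_n,
\]
together with the subadditivity of rank, to write
\[
n = \mathrm{rank}(X) \leq \mathrm{rank}(\mathbf{v}\mathbf{v}^T) + \mathrm{rank}(H_n) \leq 1 + \mathrm{rank}(H_n).
\]
(The rank of $\mathbf{v}\mathbf{v}^T$ is exactly $1$ since $\mathbf{v}$ has all positive entries $\tfrac{\sqrt{2}}{N}\binom{n-1}{i-1}$, hence is nonzero.) This forces $\mathrm{rank}(H_n) \geq n-1$, so in fact $\mathrm{rank}(H_n) = n-1$ and by the rank--nullity theorem the kernel is exactly one-dimensional, spanned by $\mathbf{1}$.

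I do not expect any serious obstacle here: the heavy lifting has already been done in the previous lemma, where the full rank of $X$ was established by row reduction. The only thing to verify carefully is that the decomposition $H_n = \mathbf{v}\mathbf{v}^T - X$ from Remark~\ref{Hstructure} is valid uniformly in $n \geq 4$ (which it is, regardless of the parity of $n$, since $\mathbf{v}\mathbf{v}^T$ contributes the outer-product term $\tfrac{2}{N^2}\binom{n-1}{i-1}\binom{n-1}{j-1}$ in every entry of $H_n$, and $X$ absorbs the remaining corrections on the main and anti-diagonals). With that in hand, the proof is a two-line rank inequality argument.
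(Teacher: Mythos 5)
Your argument is correct and is essentially identical to the paper's own proof: both use the decomposition $H_n = {\bf v}{\bf v}^T - X$, the full rank of $X$ from the preceding lemma, and subadditivity of rank to get $\mathrm{rk}(H_n) \geq n-1$, then combine with $H_n{\bf 1}={\bf 0}$ to conclude the kernel is one-dimensional. No issues.
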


\begin{proof}
It is enough to prove that $\rank(H_n) = n-1$.    Since $H_n = {\bf
  v}{\bf v}^T - X$, the subaddativity of matrix rank applied to $-X =
H_n - {\bf v}{\bf v}^T$ gives $\rank(X) \leq \rank(H_n) + \rank({\bf
  v}{\bf v}^T)$. Since $\rank({\bf v}{\bf v}^T) = 1$ and $\rank(X) = n$,  
  $n-1\leq \rank(H_n)$.
Since $0$ is an eigenvector, $n-1 = \rank(H_n)$. 
\end{proof}

\smallskip

\begin{remark}~\label{diagonalize}
Since $\psi({\bf x}) = {\bf x}^TQ_n{\bf
  x}$ is a quadratic form, we can diagonalize $Q_n$ using an orthogonal
matrix $P$, that is $P^TQ_nP = D$, where $D$ is a diagonal matrix of
real eigenvalues of $Q_n$.  Since $H_n = 2Q_n$, we could equivalently
write $\psi({\bf x}) = \frac{1}{2}{\bf x}^TH_n{\bf x}$ and diagonalize
$H_n$ instead.  Furthermore, all the results in this section apply
equally to $Q_n$, but are easier to prove and think about in terms of
$H_n$.  However, in later arguments, we use $Q_n$ instead of $H_n$ to
avoid having to keep track of the factor $\frac{1}{2}$.
\end{remark}

We prove in Theorem~\ref{gstConnected} that the connectedness of $\GST_n$
depends on the number of strictly positive and strictly negative
eigenvalues of $H_n$.  We establish here that $H_n$ has ``enough'' of
each type of eigenvalue for $n\geq 6$.  For ease of notation, we use
$H = H_n$ in the following discussion.  

\begin{thm}\label{eigenvalueNumbers}
For all $n\geq 6$, $H$ (equivalently, $Q_n$) has at least two strictly
positive and at least two strictly negative eigenvalues.  
\end{thm}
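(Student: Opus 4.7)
The plan is to determine the signature of $X$ by exploiting its extreme sparsity (Remark~\ref{Hstructure}), and then to transport that information to $H_n = \mathbf{v}\mathbf{v}^T - X$ via the Cauchy/Weyl interlacing inequalities for a rank-one positive semidefinite perturbation. Because $H_n = 2Q_n$, the two matrices have the same inertia, so it suffices to argue for $H_n$; Sylvester's Law of Inertia then lets us read off the signs of the eigenvalues from any convenient congruent form.

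First, I would exhibit a block decomposition of $X$. Since $X_{ij}=0$ unless $i=j$ or $i+j=n$, permuting the basis so that each index $i\in\{1,\ldots,n-1\}$ sits next to its partner $n-i$ makes $X$ block-diagonal, with one $2\times 2$ block for each unordered pair $\{i, n-i\}$ with $i<n-i$, a $1\times 1$ block at $i=n$ carrying the positive entry $\frac{2}{N}$, and, when $n$ is even, an additional $1\times 1$ block at $i=n/2$ carrying the positive entry $\frac{2}{N}\binom{n-1}{n/2-1}$. A direct computation from Eqn.~(\ref{Hessian}) shows that the determinant of the $\{i,n-i\}$-block equals
\[
\frac{4}{N^2}\left(\binom{n-2}{i-2}\binom{n-2}{i} - \binom{n-2}{i-1}^2\right),
\]
which is strictly negative by the strict log-concavity of binomial coefficients (with the convention $\binom{n-2}{-1}=0$ covering the extreme pair $\{1,n-1\}$, where the $(1,1)$-entry vanishes and the block determinant is $-(2/N)^2$). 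Hence every $2\times 2$ block has signature $(+,-)$, and the total signature of $X$ is $\bigl(\lceil (n+1)/2\rceil,\ \lfloor (n-1)/2\rfloor\bigr)$; equivalently, $-X$ has $\lfloor (n-1)/2\rfloor$ positive and $\lceil (n+1)/2\rceil$ negative eigenvalues.

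Finally, since $H_n = -X + \mathbf{v}\mathbf{v}^T$ is a rank-one positive semidefinite perturbation of $-X$, Weyl's interlacing gives
\[
\lambda_k(H_n) \geq \lambda_k(-X) \geq \lambda_{k+1}(H_n)
\]
for all $k$, with eigenvalues listed in weakly decreasing order. The left inequalities yield at least $\lfloor (n-1)/2\rfloor \geq 2$ strictly positive eigenvalues for $H_n$ once $n\geq 5$, while the right inequalities show that at most one strictly negative eigenvalue of $-X$ can fail to persist in $H_n$, leaving at least $\lceil (n+1)/2\rceil - 1 \geq 3$ strictly negative eigenvalues once $n\geq 6$. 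The main obstacle is the combined bookkeeping: the sign counts in the block decomposition must be verified carefully at the exceptional indices $i = 1, n/2, n$ where the generic formula fails, and the one-eigenvalue loss in the interlacing step is what forces the binding even-$n$ threshold $n/2 - 1 \geq 2$, i.e.\ $n \geq 6$.
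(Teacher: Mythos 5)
Your argument is correct, and it takes a genuinely different route from the paper's. The paper perturbs $H$ to $A = H + \epsilon B$ so that every leading principal minor is nonzero, computes an $LDL^T$ decomposition recursively to exhibit sign changes among the pivots $D_1,\dots,D_{\lfloor n/2\rfloor+1}$, invokes Sylvester's inertia theorem for $A$, and then transfers the conclusion back to $H$ by continuity of the roots of the characteristic polynomial in its coefficients. You instead read off the \emph{exact} inertia of $X$ directly: after a permutation pairing $i$ with $n-i$, $X$ is block-diagonal with $2\times 2$ blocks $\frac{2}{N}\bigl(\begin{smallmatrix}\binom{n-2}{i-2}&\binom{n-2}{i-1}\\ \binom{n-2}{i-1}&\binom{n-2}{i}\end{smallmatrix}\bigr)$ of negative determinant (strict log-concavity of binomial coefficients, with the $i=1$ block checked by hand) plus positive $1\times1$ blocks, and then you pass to $H_n=-X+{\bf v}{\bf v}^T$ by Weyl interlacing for a rank-one PSD update. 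Your counts check out: $-X$ has $\lfloor(n-1)/2\rfloor$ positive and $\lceil(n+1)/2\rceil$ negative eigenvalues, so $H_n$ has at least $\lfloor(n-1)/2\rfloor$ positive and at least $\lceil(n+1)/2\rceil-1$ negative eigenvalues; these reproduce the known inertias for $n=3,4$, handle $n=5$ (which the paper relegates to ``direct computation''), and, combined with $H_n{\bf 1}={\bf 0}$, actually pin down the inertia of $H_n$ exactly as $(\lfloor(n-1)/2\rfloor,\,1,\,\lceil(n+1)/2\rceil-1)$ --- strictly more than the theorem asserts, with no $\epsilon$-perturbation or root-continuity argument needed. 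One small expository quibble: for even $n$ the binding constraint $n/2-1\geq 2$ comes from the count of \emph{positive} eigenvalues of $-X$, which interlacing preserves without loss; the ``one-eigenvalue loss'' occurs only on the negative side, where the count drops from $n/2+1$ to $n/2$. (You may also notice that the paper's displayed example of $X$ for $n=4$ omits the extra term in the $(2,2)$ entry required by Eqn.~(\ref{Hessian}); your computation correctly follows the formula rather than the figure.)
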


\begin{proof}
Let $A = H + \epsilon B$ where $\epsilon >0$ and 
$$B_{ij} = \begin{cases} 1, &
  \mbox{if } i+j = n+1;\\ 
0, & \mbox{otherwise.}\end{cases}.$$  
Let $A_k$ denote the submatrix of $A$ consisting of the first $k$ rows
and columns of $A$ so that $\det(A_k)$ is the $k^{\rm th}$ leading
principal minor of $A$. Then $A_k = H_k$ for all $1\leq
k\leq\lfloor \frac{n}{2}\rfloor$. Therefore, for all $1\leq
k\leq\lfloor \frac{n}{2}\rfloor$, $A_k = ({\bf v}{\bf v}^T)_k - X_k$ for a
vector ${\bf v}$ and a matrix $X$, where $X_k$ is diagonal and its first 
entry is $0$ (see Remark~\ref{Hstructure}).  Hence elementary row operations on $A_k$ transform it
into an upper triangular matrix $T$ such that $T_{11} = A_{11} = \frac{2}{N^2}$
and $T_{ii} = X_{ii} = \frac{2}{N}\binom{n-2}{i-2}\neq 0$ for all $2\leq i\leq k$.  Thus $\det(A_k)
\neq 0$ for all $1\leq k\leq \lfloor \frac{n}{2}\rfloor$.  

If $k \geq \lfloor \frac{n}{2}\rfloor + 1$, then $\det(A_k)$ is a
polynomial in $\epsilon$ (for example, when $k = \lfloor
\frac{n}{2}\rfloor+1$, and $n$ is odd, $\epsilon$ appears in the
$(\lfloor \frac{n}{2}\rfloor+1, \lfloor \frac{n}{2}\rfloor+1)$ entry).  Set
$p_k(\epsilon) = \det(A_k)$ for $\lfloor \frac{n}{2}\rfloor + 1 \leq k
\leq n$.  This is a finite set of polynomials, each with a finite
number of zeros.  Call that set of zeros $Z$, and let
\begin{equation}
\label{epsZ}
\epsilon_Z = \min(\{|z| : z\in Z\} - \{0\}),
\end{equation}
which is strictly positive (since $Z$ is finite). 
Then for any $\epsilon \in (0, \epsilon_Z)$ we have that $ \det(A_k)\neq 0$ for all 
$\lfloor\frac{n}{2}\rfloor + 1 \leq k \leq n$. Therefore all the leading principal minors
of $A$ are non-zero (including $\det(A) = \det(A_n)$). 

Since all of the leading principal minors of $A$ are
non-zero, $A$ has a unique $LU$-decomposition \cite[Theorem
  2.13]{PO}.  Since $A$ is symmetric, the $LU$-decomposition can be
transformed into an $LDL^T$-decomposition where $L$ is lower
triangular and $D$ is diagonal~\cite[Theorem 2.14 and discussion]{PO}.
Furthermore, simply writing this
expression out gives the following recursive formulae for the entries
of $D$ and $L$, assuming $i > j$:    
\begin{align}
D_j = &  A_{jj} - \sum_{k = 1}^{j-1}L_{jk}^2D_k \label{D}\\
L_{ij} = & \frac{1}{D_j}\bigg(A_{ij} - \sum_{k=1}^{j-1}
L_{ik}L_{jk}D_k\bigg). \label{L}
\end{align}
We show that $D_1 > 0$, $D_i<0 $ for $2\leq i\leq \lfloor
\frac{n}{2}\rfloor$ and $D_{\lfloor \frac{n}{2}\rfloor + 1} >0$.
Therefore $D$ has at least two strictly negative eigenvalues and two
strictly positive
eigenvalues for $n\geq 6$.  By Sylvester's Theorem~\cite{Sy}, $A$
and $D$ have the same index (or inertia) and hence $A$ also has at least
two strictly negative eigenvalues and two strictly positive eigenvalues for $n\geq
6$. Before digging into computing $D_i$ we argue that $H$ must also
have at least two strictly negative eigenvalues and two strictly positive eigenvalues
for $n\geq 6$.

Over the
complex numbers, roots of a polynomial are 
continuous functions of the coefficients of the
polynomial~\cite[Theorem (1,4)]{Ma} which implies that each
eigenvalue of $A$ corresponds to an eigenvalue of $H$. More formally,
let $p_A(x) = x^n + c_1 x^{n-1} + \cdots + c_n$ denote the 
characteristic polynomial of $A$ and $p_{H}(x) = x^n + d_1 x^{n-1} +
\cdots + d_n $ be the characteristic polynomial of $H$. By
construction, $d_i = c_i + \epsilon_i$ for $1\leq i\leq n$ and
each $\epsilon_i$ approaches $0$ as $\epsilon$ (in the definition of $A$) goes to $0$.  Suppose that:
$$p_A(x) = \Pi_{k=1}^q(x-a_i)^{m_i}$$ with the distinct $a_i\in \RR$, since $A$ is
symmetric. Then for any 
$$0<r_k<\min\{|a_k - a_i|,\ i = 1, 2, \cdots, k-1, k+1, \cdots q\},$$
there exists a $\delta$ such that if $|c_j - d_j|<\delta$ for all
$1\leq j\leq n$, then $p_H(x)$ has $m_k$ roots in a circle of radius
$r_k$ centered at $a_k$.  Since $H$ is also symmetric, its roots are
also real and if $a_k$ is positive (resp. negative), then for small
enough values of  $r_k$, the corresponding roots of $p_H(x)$ are also positive
(resp. negative).  Let $\epsilon$  (in the definition of $A$), be less
than $\epsilon_Z$ from (\ref{epsZ}), and also small enough so that if
$A$ has at least two strictly positive eigenvalues and at least two
strictly negative eigenvalues for $n\geq 6$, then $H$ does also.  

We finish by showing that  $D_1 > 0$, $D_i<0 $ for $2\leq i\leq \lfloor
\frac{n}{2}\rfloor$ and $D_{\lfloor \frac{n}{2}\rfloor + 1} >0$ for
$A$.  Throughout this discussion, we assume $i > j$ and use
Eqns.~(\ref{D}) and (\ref{L}).  For all $i\neq
n-j+1$, $A_{ij} = H_{ij}$.  
Thus $D_1 = H_{11} = \frac{2}{N^2} >0$. Furthermore, 
$$L_{i1} =\frac{1}{D_1}\bigg(D_1\binom{n-1}{i-1}\binom{n-1}{0}\bigg) =
\binom{n-1}{i-1}\mbox{, for } 1\leq i\leq n-1.$$  
Therefore 
\begin{equation*}
A_{ij} = H_{ij} = D_1L_{i1}L_{j1}, \mbox{ for all } i\neq n-j,\ n-j+1.
\end{equation*} 
We use this fact repeatedly throughout the remaining discussion.  
Also note that  $i\neq n-j,\ n-j+1$ for all $1\leq i,j\leq
\lfloor \frac{n}{2}\rfloor$.    Hence, 
$$L_{ij} = -\frac{1}{D_j}\bigg(\sum_{k=2}^{j-1}
L_{ik}L_{jk}D_k\bigg) \mbox{ for all } i\neq n-j,\ n-j+1. $$
Then, by induction on $j$, $L_{ij} = 0$ for all $1 < i,j \leq \lfloor
\frac{n}{2}\rfloor$ since $L_{i2}$ is trivially zero and then the sum
for $L_{ij}$ only includes expressions where the second index is strictly less
than $j$.   Therefore 
\begin{equation*}
D_i =  H_{ii} - \sum_{k = 1}^{j-1}L_{jk}^2D_k =  -
\frac{2}{N}\binom{n-2}{i-2} < 0, \mbox{ for all } 1 < i\leq\bigg\lfloor\frac{n}{2}\bigg\rfloor.
\end{equation*}
Thus  we have $D_1>0$ and, for $n\geq 6$,  at least two strictly negative
eigenvalues for $D$.  

Finally, we need to argue that $D_{\lfloor \frac{n}{2}\rfloor + 1}
>0$.  While the arguments are similar, they differ slightly for 
even and odd values of $n$ and are somewhat technical so we placed them in the appendix.  
When $n\geq 6$ is odd, we get 
$$D_{\lfloor \frac{n}{2}\rfloor + 1} =
\frac{2}{N}\binom{n-2}{\lfloor\frac{n}{2}\rfloor}\bigg(\frac{2}{\lfloor\frac{n}{2}\rfloor-1}\bigg)+\epsilon
> 0,
$$
and when $n\geq 6$ is even, $\lfloor\frac{n}{2}\rfloor = \frac{n}{2}$, so
that   
$$
D_{\frac{n}{2}+1} 
= \frac{2}{N}\binom{n-2}{t-1}\bigg(\frac{n-1}{\frac{n}{2}(\frac{n}{2}-2)}\bigg)
 + \frac{\epsilon^2N}{\binom{n-2}{t-2}} >0.
$$
\end{proof}

\section{The Geometry of  $\GST_n$}\label{sec:geometry}

 The space $\GST_n$ is a bounded (but not closed) subspace of $\mathbb{R}^n$.
Fig.~\ref{fig:n3planes} shows that when $n=3$, this space consists of a
pair of two-dimensional components, each of which is convex.  
 
\smallskip

\begin{remark}~\label{involution}
For any $n$, if ${\bf p}\in \GST_n$ then ${\bf 1}
- {\bf p} = (1-p_1, 1-p_2, \ldots, 1-p_n)\in \GST_n$ as may
be verified either algebraically or, more directly, by the symmetry of the
states 0 and 1 in the GST problem.  Thus the map ${\bf p} \mapsto
{\bf 1} - {\bf p}$ is a involution from the solution space to itself;
in  Fig.~\ref{fig:n3planes} 
this maps each connected component onto the other. This involution
also moves every point, since the unique fixed point has $p_i=1/2$ for
all $i$ and this point fails influence.  

Furthermore, if ${\bf p}\in \GST_n$ lies in the GST solution space then for any
constant $0<c \leq 1$, the scaled vector $c\cdot {\bf p}\in \GST_n$,
since $\psi$ is a homogeneous quadratic in the coordinates of ${\bf p}$.  
\end{remark}

These observations are part of the following more general result. 

\begin{prop}
\label{affine}
\mbox{ }

\begin{itemize}
\item[(i)]
For any real values $x$ and $y$ and real vector ${\bf p} =(p_1, \ldots, p_n)$,
$$\psi(x{\bf p} + y{\bf 1}) = x^2\psi({\bf p}).$$ 

\item[(ii)] In particular, if ${\bf p} \in [0,1]^n$ satisfies independence then
$x{\bf p} + y{\bf 1}$ does also, provided this vector also lies in $[0,1]^n$. 
\end{itemize}
\end{prop}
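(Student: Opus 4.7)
The plan is to exploit the quadratic-form structure of $\psi$ together with Proposition~\ref{eigenvalues}. Recall from Remark~\ref{diagonalize} that $\psi({\bf x}) = {\bf x}^T Q_n {\bf x}$ for the symmetric matrix $Q_n = \frac{1}{2} H_n$, and Proposition~\ref{eigenvalues} gives $H_n {\bf 1} = {\bf 0}$, hence $Q_n {\bf 1} = {\bf 0}$.

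For part (i), I would simply expand bilinearly:
\begin{equation*}
\psi(x{\bf p} + y{\bf 1}) = (x{\bf p} + y{\bf 1})^T Q_n (x{\bf p} + y{\bf 1}) = x^2\, {\bf p}^T Q_n {\bf p} + 2xy\, {\bf p}^T Q_n {\bf 1} + y^2\, {\bf 1}^T Q_n {\bf 1}.
\end{equation*}
The last two terms both vanish because $Q_n {\bf 1} = {\bf 0}$, leaving $x^2\, {\bf p}^T Q_n {\bf p} = x^2 \psi({\bf p})$.

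For part (ii), if ${\bf p} \in [0,1]^n$ satisfies independence then $\psi({\bf p}) = 0$ by Proposition~\ref{psiprop}, and part (i) immediately yields $\psi(x{\bf p} + y{\bf 1}) = x^2 \cdot 0 = 0$. Provided that $x{\bf p} + y{\bf 1}$ lies in $[0,1]^n$ (so it corresponds to an admissible tuple of probabilities), Proposition~\ref{psiprop} again tells us this vector satisfies independence.

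There is really no hard step here: the whole content is that ${\bf 1}$ is in the kernel of the symmetric matrix defining the quadratic form, which has already been established. The only minor point worth noting (and the reason the statement uses $x^2$ rather than just $x$) is that the cross term $2xy\, {\bf p}^T Q_n {\bf 1}$ drops out for the same reason as the pure ${\bf 1}$-term, so the $y$-dependence disappears entirely; this is what encodes the geometric fact that translating by any multiple of the all-ones direction preserves the independence hypersurface.
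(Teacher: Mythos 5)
Your proof is correct, but it takes a different route from the paper's. The paper proves part (i) by a direct polynomial expansion: it first reduces to the case $x=1$ using homogeneity, then substitutes $p_i + y$ into the explicit formula for $\psi$, observes that the $y^2$-coefficient is $\psi(y{\bf 1})=0$ and the $y^0$-coefficient is $\psi({\bf p})$, and verifies by hand (using the Pascal identity $\binom{n-2}{k-1}+\binom{n-2}{k}=\binom{n-1}{k}$) that the $y^1$-coefficient vanishes. You instead invoke the matrix representation $\psi({\bf x})={\bf x}^TQ_n{\bf x}$ from Remark~\ref{diagonalize} together with $Q_n{\bf 1}={\bf 0}$, which follows from Proposition~\ref{eigenvalues} since $H_n=2Q_n$; the bilinear expansion then kills both the cross term and the $y^2$ term at once. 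Since Section~\ref{sec:quadForm} precedes Proposition~\ref{affine}, there is no circularity, and your argument is cleaner in that it does not redo the combinatorial computation --- that work is already packaged inside the proof that the row sums of $H_n$ vanish (which uses the very same Pascal identity). The paper's direct expansion is self-contained and does not depend on the Hessian machinery, which is its only advantage; your version makes transparent that the entire content of the proposition is the statement ${\bf 1}\in\ker Q_n$, a fact the paper itself exploits later (e.g.\ in the proof of Proposition~\ref{allLines}). Part (ii) is handled identically in both.
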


\begin{proof}
Part (i) holds for $y=0$, since $\psi$ is a homogeneous quadric
polynomial, so it suffices to establish part (i) when $x=1$.   In that
case, if we replace  $p_i$ by $p_i+y$ in 
$\psi$, we see that the coefficient of $y^2$ is $\psi(y{\bf 1}) = 0$,
and the coefficient of $y^0$ is $\psi({\bf p})$.  The remaining terms
correspond to the coefficient of $y^1$.  Checking that  this coefficient is equal to 0
requires more careful algebraic analysis (and the use of the combinatorial identity:
$\binom{n-2}{k-1}+\binom{n-2}{k} = \binom{n-1}{k}$), but the
computation is straightforward. This establishes part (i). Part (ii) now follows
from Proposition~\ref{psiprop}. 
\end{proof}

This proposition has a few consequences of note. First, it provides
an alternative argument for the point made in
Remark~\ref{involution}. 
However, it proves further that if ${\bf p}\in
\Ind_n$ then the entire line between ${\bf p}$ and ${\bf 1}-{\bf p}$
also lies in $\Ind_n$.  
Note that any such line must pass through the `middle point' of $[0,1]^n$, namely 
$${\bf m} = (1/2, 1/2, \ldots, 1/2),$$ 
and this point will play an important role in forthcoming arguments.

Furthermore, if we want to explore
points near ${\bf m}\in \Ind_n$ (which is helpful for the proof 
of Theorem~\ref{gstConnected}) -- say, points of the form ${\bf p}
= (1/2 + x_1, \ldots, 1/2 + x_n)$ where $-1/2 < x_i < 1/2$ -- then ${\bf
  p} \in \Ind_n$ if and only if $\psi (x_1, \ldots, x_n) = 0$.  Note
that $(x_1, \ldots, x_n)$ may or may not be in $\Ind_n$ since the
coordinates may or may not all be non-negative.  The question of which
of these points are in $\GST_n$ is a bit more subtle but, generally,
they will be so if  ${\bf p}\in \GST_n$ to start with.  

\begin{remark}\label{equivalence}
Let ${\bf p}, {\bf q}\in \Ind_n$.  We note that
Proposition~\ref{affine} gives an equivalence relation on $\Ind_n$.
We say ${\bf p}\sim {\bf q}$ if and only if ${\bf p} = a{\bf q} +b{\bf
  1}$ for some $a,b\in \mathbb{R}$ with $a\neq 0$. For example, the two
points given in Section~\ref{bdryPoints} are equivalent, as are the
two solutions to $\GST_3$ shown in Fig.~\ref{hands} (use $a = \frac{9}{4}$
and $b = \frac{1}{4}$).  Also note that if ${\bf p},{\bf q}\in [0,1]^n$
and $  {\bf p}\sim {\bf q}$ then ${\bf p}\in\GST_n$ if and only if
${\bf q}\in \GST_n$.  
\end{remark}

The more
general expression $\psi(x{\bf p} + y{\bf q})$ for two points ${\bf p}$ and ${\bf q}$ in $\RR^n$  is
helpful for investigating the convexity of $\Ind_n$ and $\GST_n$, and is 
useful for our next result regarding the equivalence relation $\sim$.  
\begin{align*}
\psi(x{\bf p} + y{\bf q}) = &  
(x{\bf p} + y {\bf q})^T Q_n (x{\bf p} + y {\bf q})\\
= &  x^2\psi({\bf p}) + y^2\psi({\bf q}) + xyCT({\bf p}, {\bf q})
\end{align*}
where the `cross term' CT is given by
\begin{align}\label{eq:crossTerm}
CT({\bf p},{\bf q}) = 2{\bf p}^TQ_n{\bf q}.
\end{align}

It is the \emph{cross term} that we are concerned with in our study of
GST space since the line between two arbitrary points ${\bf p}$ and ${\bf q}$ in $\Ind_n$ lies in $\Ind_n$ if and only
if the cross term $CT({\bf p}, {\bf q})$ is zero.  



\subsection{A geometrically special point in $\Ind_n$}

\begin{prop}\label{allLines}
For any $n \geq 3$,  a point ${\bf x}\in \Ind_n$ has the property that for all
 ${\bf  p} \in \Ind_n$ the line segment from ${\bf p}$ to ${\bf x}$ lies in
  $\Ind_n$ if and only if ${\bf x} \sim {\bf 1}$. 
 
  \end{prop}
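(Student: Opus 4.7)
The plan is to recast the hypothesis as a linear condition using the cross-term formula and then reduce the problem to showing that $\Ind_n$ linearly spans $\RR^n$. For ${\bf p}, {\bf x} \in \Ind_n$, both lie in $[0,1]^n$, hence so does the entire segment $t{\bf p} + (1-t){\bf x}$ by convexity; since $\psi({\bf p}) = \psi({\bf x}) = 0$, the identity preceding Eqn.~\eqref{eq:crossTerm} collapses to $\psi(t{\bf p} + (1-t){\bf x}) = t(1-t)\,CT({\bf p}, {\bf x})$, so the segment lies in $\Ind_n$ if and only if $CT({\bf p}, {\bf x}) = 2{\bf p}^T Q_n {\bf x} = 0$. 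Thus the proposition's hypothesis is exactly the orthogonality statement that $Q_n{\bf x}$ is perpendicular to every point of $\Ind_n$.

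The direction $(\Leftarrow)$ is then immediate: if ${\bf x} \sim {\bf 1}$ then ${\bf x} = c{\bf 1}$, and Proposition~\ref{eigenvalues} yields $Q_n{\bf x} = 0$. For the converse I claim $\mathrm{span}(\Ind_n) = \RR^n$; given this, the orthogonality condition forces $Q_n{\bf x} = 0$, and the one-dimensionality of $\ker Q_n = \mathrm{span}({\bf 1})$ (from Propositions~\ref{eigenvalues} and \ref{eigenspace}) gives ${\bf x} = c{\bf 1}$, i.e.\ ${\bf x} \sim {\bf 1}$.

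To establish the spanning claim, apply Proposition~\ref{affine}(i) with $x=1$, $y=1/2$ to obtain the translation identity $\psi({\bf m} + {\bf v}) = \psi({\bf v})$. Consequently ${\bf m} + \epsilon {\bf v} \in \Ind_n$ for every ${\bf v}$ in the real zero-cone $Z = \{{\bf v}\in\RR^n : \psi({\bf v}) = 0\}$ and every sufficiently small $\epsilon > 0$. Since ${\bf m} \in \Ind_n$, taking differences gives $\mathrm{span}(\Ind_n) \supseteq \mathrm{span}(Z)$, so it suffices to show $\mathrm{span}(Z) = \RR^n$. Evaluating Eqn.~\eqref{eq:gstIndependence} directly at the standard basis vectors yields $\psi({\bf e}_1) = 4^{-(n-1)} > 0$ and $\psi({\bf e}_n) = 4^{-(n-1)} - 2^{-(n-1)} < 0$ for all $n \geq 3$, so $Q_n$ is indefinite. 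Fixing an orthonormal eigenbasis of $Q_n$ with a positive-eigenvalue vector ${\bf u}_+$ and a negative-eigenvalue vector ${\bf u}_-$, the two vectors $\sqrt{-\psi({\bf u}_-)}\,{\bf u}_+ \pm \sqrt{\psi({\bf u}_+)}\,{\bf u}_-$ both lie in $Z$ (the cross term vanishes by orthogonality of eigenspaces for distinct eigenvalues) and their linear span contains both ${\bf u}_+$ and ${\bf u}_-$. Pairing a fixed ${\bf u}_+$ against every negative-eigenvalue eigenvector, symmetrically a fixed ${\bf u}_-$ against every positive-eigenvalue eigenvector, and using that zero-eigenvalue eigenvectors lie in $Z$ outright, recovers the entire eigenbasis inside $\mathrm{span}(Z)$. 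The main technical step is this spanning computation -- particularly the indefiniteness verification, which is needed here for all $n \geq 3$ whereas Theorem~\ref{eigenvalueNumbers} is only stated for $n \geq 6$.
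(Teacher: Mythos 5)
Your proof is correct. Both you and the paper reduce the hypothesis to the statement that the cross term $CT({\bf p},{\bf x})=2{\bf p}^TQ_n{\bf x}$ vanishes for every ${\bf p}\in\Ind_n$, and both ultimately rest on the indefiniteness of $Q_n$ together with the one-dimensionality of its kernel. Where you diverge is in how you extract $Q_n{\bf x}={\bf 0}$ from that vanishing. The paper works in the diagonalizing coordinates near ${\bf m}$, restricts to null vectors ${\bf q}$ of $\psi$ in a small box, and runs a coordinate-by-coordinate sign-flip argument to kill each $c_j=d_jy_j$. You instead prove the cleaner statement that the real zero cone $Z=\{\psi={\bf 0}\}$ (hence $\Ind_n$, via translation by ${\bf m}$) linearly spans $\RR^n$, by exhibiting explicit null vectors $\sqrt{-\psi({\bf u}_-)}\,{\bf u}_+\pm\sqrt{\psi({\bf u}_+)}\,{\bf u}_-$ from pairs of eigenvectors with opposite-sign eigenvalues; then the linear functional ${\bf v}\mapsto{\bf v}^TQ_n{\bf x}$ vanishes identically and $Q_n{\bf x}={\bf 0}$ follows at once. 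This packaging is arguably tidier, and it has the concrete bonus that your computation $\psi({\bf e}_1)=4^{-(n-1)}>0>4^{-(n-1)}-2^{-(n-1)}=\psi({\bf e}_n)$ certifies indefiniteness uniformly for all $n\geq 3$, whereas the paper defers to Theorem~\ref{eigenvalueNumbers} for $n\geq 6$ and leaves $3\leq n\leq 5$ as ``readily verified.'' One shared caveat: like the paper, you invoke Proposition~\ref{eigenspace} for $\dim\ker Q_n=1$, which is stated only for $n\geq 4$; the case $n=3$ needs the (easy) direct check that $H_3$ has rank $2$, as recorded in the proof of Corollary~\ref{cor:connected}.
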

  
  \begin{proof}
  
The `if' direction is readily established. 
If ${\bf x} \sim {\bf 1}$ and  ${\bf p} \in \Ind_n$ then Eqn.~(\ref{eq:crossTerm}) and the identity $Q_n {\bf 1} = {\bf 0}$,
implies that  $CT({\bf p}, {\bf x}) =0$.   Thus, $\psi(t{\bf p} + (1-t){\bf x}) = 0$ for all $t \in
[0,1]$, and thus each point on this line lies in $\Ind_n$. 

\bigskip

For the `only if' part,  suppose that ${\bf x} \in [0,1]^n$ satisfies
the property described  (we will say that ${\bf x}$ is {\em
  permissible}).    For all ${\bf q} \in [-1/3, 1/3]^n$ for which $\psi({\bf q})=0$ we have ${\bf m} + {\bf q} \in \Ind_n$ by Proposition~\ref{affine}(ii).   Thus, since ${\bf x} \in \Ind_n$ and by the special assumption concerning
this point, we have:
  $$0=CT({\bf x}, {\bf m} + {\bf q}) =  CT({\bf x}, {\bf m}) + CT({\bf
  x}, {\bf q}) = 0 +CT({\bf x}, {\bf q}),$$ 
which gives
  \begin{equation}
  \label{CTw}
  CT({\bf x}, {\bf q}) =0
  \end{equation}
  for all ${\bf q} \in [-1/3, 1/3]^n$ for which $\psi({\bf q})=0$.
  Let $P$ and $D$ be as given in Remark~\ref{diagonalize}.   If we let (fixed) ${\bf y} = P^T {\bf x}$  
and (variable) ${\bf z} =   P^T {\bf q}$, then for all ${\bf z} \in B = P^T[-1/3, 1/3]^n$ for which
${\bf z}^TD{\bf z}=0$ (i.e. $\psi({\bf q})=0$) we have (from (\ref{CTw})):
\begin{equation}
\label{befeq}
2{\bf y}^TD {\bf z} = 0.
\end{equation}
By Proposition~\ref{eigenspace}, we can order the diagonal entries $D$
as $d_1, \ldots, d_n$  so that $d_1=0$, and $d_j\neq 0$ for $j>1$. 
Set $c_i= d_i y_i$ for  each $i$. 
Then for all ${\bf z}$ in $B$ for which 
\begin{equation}
\label{befeq2}
\sum_{i=2}^n d_i z_i^2 = 0,
\end{equation}   we must also have (from Eqn.~(\ref{befeq})):
$$\sum_{i=2}^n c_i z_i =0.$$

Now, $D$ not only has $n-1$ non-zero eigenvalues, but at least one is
strictly positive and at least one is strictly negative. This is
readily verified for $3 \leq n \leq 5$, and for $n \geq 6$ it is an
immediate consequence of the stronger result stated in
Proposition~\ref{eigenvalueNumbers}.  Consequently, for any $j>1$, the
equation $\sum_{i=2}^n d_i z_i^2 = 0$ has a solution for ${\bf z} \in
B$ with $z_j \neq 0$.    

Now, suppose that $c_j \neq 0$ for some value of $j$.  Let ${\bf z}$
be a vector in $B$ that satisfies Eqn.~(\ref{befeq2}) and has $z_j
\neq 0$,  and let ${\bf z}'$ be the vector obtained from ${\bf z}$ by
flipping the sign of $z_j$ while leaving the $z_i$ values unchanged
for all $i \neq j$. Then ${\bf z}'$ still lies in $B$ and satisfies
Eqn. (\ref{befeq2}) but $\sum_{i=2}^n c_iz_i$ and $\sum_{i=2}^n c_i
z'_i$ cannot both be zero, since they differ by a term of
magnitude $2|c_iz_i| \neq 0$. Thus if ${\bf x}$ is permissible then
$c_i$ must be zero for all $i>1$ and since $d_i \neq 0$ for all $i>1$, we must have: 
$$y_2=y_3=\ldots y_n=0.$$
Thus,  the set of possible values of ${\bf y}$ for which ${\bf x}$ is
permissible is precisely the set  $$\{{\bf y}=(y,0,0,\ldots, 0): P
{\bf y} \in [0,1]^n\},$$  and this is simply $\{p\cdot {\bf 1}: p \in
[0,1]\}$, since $(1,1,\dots, 1)$ is the eigenvector of $H_n$
corresponding to $0$.  

\end{proof}

\subsection{Convexity}

As previously noted, Proposition~\ref{affine} shows that if ${\bf p}\in \GST_n$ then 
$1-{\bf p}$ and the line segment $ (1-t){\bf p} + t(1-{\bf p})$, for
$0\leq t\leq 1$,  between them are all in $\Ind_n$.  Easy computations
show that the point ${\bf m} = (1/2, \ldots, 1/2)$ lies on the line $(1-t){\bf
  p} + t(1-{\bf p})$ for any point ${\bf p}$ but ${\bf m}$
fails influence and hence is not in $\GST_n$. Therefore $\GST_n$ is
not convex. However, in this example, all
the points still lie in independence space and so it might still seem
possible that 
$\Ind_n$ is convex.  Using the cross term given in Eqn.~(\ref{eq:crossTerm}) and the
points from Section~\ref{sec:points}, we show, more strongly, that there are
points in $\GST_n$ where the line between them does not lie in
$\Ind_n$ and hence independence space is not convex either.  

If $n\geq 10$, then we can use two different solutions to $f(\theta) =
0$ to test the convexity of the space by evaluating $CT$.  The
polynomial $f(\theta)$
has three solutions when $n = 10$, two of which are approximately
$.100499$ and $0.86659$.  If we set ${\bf p} = (0.100499, 0.100499^2,
\ldots, 0.100499^{10})$ and ${\bf q} = (0.86659, 0.86659^2, \ldots,
0.86659^{10})$, we have two points in $\GST_n$ such
that $CT({\bf p},{\bf q}) = 30.0527$.  Thus every point on the line $t{\bf p} +
(1-t){\bf q}$ , except for ${\bf p}$ and ${\bf q}$, is outside
independence space and hence outside $\GST_n$.

For smaller values of $n$, we get only one point in $\GST_n$ from
looking at $f(\theta)$, but we can use one of the points $(0,1,\ldots, 1,
\frac{\sqrt{N}-2}{\sqrt{N}-1})$ or $(1,0,\ldots, 0,
\frac{1}{\sqrt{N}-1})$ along with the one point obtained using
$f(\theta)$ to produce points where the line between them lies
entirely outside $\GST_n$.   
 
Let ${\bf p}\in \GST_n$.  Proposition~\ref{affine} shows that
$\frac{1}{\max{\bf p}}{\bf p} \in \Ind_n$ and that every
point on the line segment connecting ${\bf p}$ and $\frac{1}{\max{\bf p}}{\bf p}$ is in
independence space.  Furthermore since each point on the line is a
non-zero multiple of ${\bf p}$, they satisfy influence and hence the
entire line is in $\GST_n$.  

These results show that there are pairs of points for $n\geq 4$ in
$\GST_n$ where (i) the line between them lies entirely outside the space,
(ii) exactly one point lies outside the space and (iii) the line is
entirely inside the space.    

\section{The Topology of $\GST_n$}\label{sec:topology}

As noted previously, the space $\GST_n$ is a bounded (but not closed) subspace of $\mathbb{R}^n$.
Fig.~\ref{fig:n3planes} shows that when $n=3$, this space consists of a
pair of two-dimensional components, each of which is contractable.  

\subsection{Contractable}
Recall that a space is {\em contractable} if it can be continuously
shrunk to a point (i.e. if the identity map is homotopic to the
constant map).  
\begin{prop}
For each $n \geq 3$, $\Ind_n$ is contractable, but $\GST_n$ is not.
\end{prop}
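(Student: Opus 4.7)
The plan has two parts, handled by very different techniques. For the contractibility of $\Ind_n$ I would exhibit a straight-line homotopy to the ``centre'' ${\bf m} = (1/2,\ldots,1/2)$. For the non-contractibility of $\GST_n$ I would exploit the $\mathbb{Z}/2$-action coming from the complementation involution ${\bf p}\mapsto {\bf 1} - {\bf p}$ and invoke Smith's theorem (which was flagged in the introduction as one of the main tools).

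For the first part, define $H\colon \Ind_n\times [0,1]\to [0,1]^n$ by
\[
H({\bf p},t) = (1-t){\bf p} + t{\bf m} \;=\; (1-t){\bf p} + \tfrac{t}{2}{\bf 1}.
\]
Since ${\bf p},{\bf m}\in[0,1]^n$, the image of $H$ lies in $[0,1]^n$. Proposition~\ref{affine}(i), applied with $x=1-t$ and $y=t/2$, gives $\psi(H({\bf p},t)) = (1-t)^2\psi({\bf p}) = 0$, so $H({\bf p},t)\in\Ind_n$ for all $t$. Since $H(\cdot,0)$ is the identity on $\Ind_n$ and $H(\cdot,1)$ is the constant map to ${\bf m}$, the space $\Ind_n$ deformation retracts to a point and is contractible.

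For the second part, let $\sigma\colon[0,1]^n\to [0,1]^n$ be the involution $\sigma({\bf p}) = {\bf 1} - {\bf p}$. Then $\sigma$ preserves $\Ind_n$ (by Proposition~\ref{affine}(i) with $x=-1$, $y=1$, giving $\psi({\bf 1}-{\bf p})=\psi({\bf p})$) and also preserves $\GST_n$, since $1-p_s \ne 1-p_{n-s+1}$ iff $p_s\ne p_{n-s+1}$ (Proposition~\ref{prop:influence}). The only fixed point of $\sigma$ in $[0,1]^n$ is ${\bf m}$, which by Proposition~\ref{prop:influence} fails influence, so ${\bf m}\notin\GST_n$. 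Hence $\sigma$ restricts to a \emph{free} $\mathbb{Z}/2$-action on $\GST_n$. Smith's classical theorem on periodic maps states that any action of $\mathbb{Z}/p$ on a finite-dimensional, paracompact, $\mathbb{F}_p$-acyclic space must have a non-empty fixed point set. If $\GST_n$ were contractible it would be $\mathbb{F}_2$-acyclic, and as $\GST_n$ is a semi-algebraic subset of $\mathbb{R}^n$ it is finite-dimensional and triangulable (hence paracompact), so Smith's theorem would force $\sigma$ to have a fixed point in $\GST_n$, contradicting the previous sentence. Therefore $\GST_n$ is not contractible.

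The main obstacle is the second half: one must verify that $\GST_n$ meets the hypotheses of Smith's theorem. This is where the semi-algebraic structure of $\Ind_n\subset\mathbb{R}^n$ (cut out by the polynomial $\psi$) is essential, since then $\GST_n$ inherits a finite CW-structure by triangulation of semi-algebraic sets. The first half is comparatively easy, but relies crucially on Proposition~\ref{affine}, which tells us that ${\bf m}$ is exactly the right target for a linear contraction because adding any multiple of ${\bf 1}$ does not perturb $\psi$ on $\Ind_n$.
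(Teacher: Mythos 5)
Your proof is correct and follows essentially the same route as the paper: a straight-line homotopy onto a point equivalent to ${\bf 1}$ (using Proposition~\ref{affine}) for the contractibility of $\Ind_n$, and Smith's theorem applied to the fixed-point-free involution ${\bf p}\mapsto{\bf 1}-{\bf p}$ for the non-contractibility of $\GST_n$. If anything, you are more careful than the paper in checking that $\GST_n$ (being semi-algebraic, hence finite-dimensional and triangulable) satisfies the hypotheses of Smith's theorem, which the paper invokes only for ``subsets of Euclidean space.''
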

\begin{proof}
For $\Ind_n$,  select any point ${\bf x}\in \Ind_n$ for which ${\bf
  x}\sim {\bf 1}$ (e.g. ${\bf x} = {\bf 0}$ or ${\bf m} =
(\frac{1}{2},\ldots,\frac{1}{2})$).  Then we have the homotopy:
 $$F: \Ind_n \times [0,1] \rightarrow \Ind_n$$
 $$({\bf p},t) \mapsto (1-t){\bf p}+t{\bf x},$$
 for which $F(\cdot, 0)$ is the identity map, $F(\cdot, 1)$ maps
 $\Ind_n$ to ${\bf x}$, and $F({\bf p},t)\in \Ind_n$ for all $t\in
 [0,1]$ by Proposition~\ref{affine}.  

An early classical topological result of Smith~\cite{S} implies that
any subset $S$ of Euclidean space is not contractable if there is a 
continuous function $f:S \rightarrow S$ that has period two (i.e. $f \circ f$ is the identity map)
and which has no fixed point.    For $\GST_n$, the map ${\bf p}
   \mapsto {\bf 1} - {\bf p}$ is such a function, and since $\GST_n$ is a subset of
 Euclidean space it follows  that $\GST_n$ is not contractable. 
 \end{proof}

\subsection{Connectedness of $\GST_n$}
Since $\Ind_n$ is contractable, it is connected.  The connectedness of
$\GST_n$ is much more subtle and depends on the eigenvalues of the
Hessian matrix $H_n$ of $\psi$.  Consider any two points ${\bf p},
{\bf q} \in \GST_n$. By Proposition~\ref{allLines},  there are
 straight-line-paths from ${\bf p}$ to ${\bf m} = (\frac{1}{2},
 \ldots, \frac{1}{2})$, and from ${\bf m}$ to ${\bf q}$ and the
 concatenation of these two paths lies entirely in $\Ind_n$. However,
 exactly one point on this concatenated path, namely ${\bf m}$, fails to lie in
 $\Inf_n$.  It is not enough to show there is a
 `perturbed' path within $\Ind_n$ from ${\bf p}$ to ${\bf q}$  that
 avoids ${\bf m}$; we must also avoid all points not in $\Inf_n$.  
To study this further, we require one more topological result.  
\begin{lem}
\label{topology}
Let $M$ be a compact manifold and $I$ an open interval. Let ${\bf p} = ({\bf
  x},t)\in M\times I$ and
${\bf q} = ({\bf y},s)\in M\times I$. Then there exists
$\phi:M\rightarrow M\times I$ such that $M$ is homeomorphic to 
$\image(\phi)$ and ${\bf p}, {\bf q}\in\image(\phi)$.  
\end{lem}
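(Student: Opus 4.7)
The plan is to realize $\phi$ as (essentially) the graph of a continuous map $f\colon M\to I$, using the observation that the graph map $\mathbf{z}\mapsto(\mathbf{z},f(\mathbf{z}))$ is automatically a topological embedding: it is continuous and injective, and the first-factor projection $\pi_1\colon M\times I\to M$ restricts to a continuous inverse on its image. So the task reduces to choosing $f$ appropriately, plus a small adjustment in a degenerate case.

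First I would handle the generic case $\mathbf{x}\neq\mathbf{y}$. A compact manifold is Hausdorff, hence normal, so the two-point closed set $\{\mathbf{x},\mathbf{y}\}$ is a valid domain for the Tietze extension theorem. Fix a closed subinterval $[a,b]\subset I$ containing both $t$ and $s$, define $g\colon\{\mathbf{x},\mathbf{y}\}\to[a,b]$ by $g(\mathbf{x})=t$, $g(\mathbf{y})=s$, and let $f\colon M\to[a,b]\subset I$ be a continuous Tietze extension. Then $\phi(\mathbf{z}):=(\mathbf{z},f(\mathbf{z}))$ is the desired embedding, with $\phi(\mathbf{x})=\mathbf{p}$ and $\phi(\mathbf{y})=\mathbf{q}$.

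The remaining case is the degenerate one, $\mathbf{x}=\mathbf{y}$ with $\mathbf{p}\neq\mathbf{q}$ (so $t\neq s$). Here the pure graph construction cannot succeed, because $\mathbf{p}$ and $\mathbf{q}$ live in the same fiber $\{\mathbf{x}\}\times I$. My plan is to solve an auxiliary version of the problem with a distinct substitute point first. Pick any $\mathbf{x}'\in M\setminus\{\mathbf{x}\}$ (which exists unless $M$ is a single point, in which case $\mathbf{p}=\mathbf{q}$ is forced), and apply the previous step to obtain a graph embedding $\phi_0$ whose image contains $\mathbf{p}$ and the auxiliary point $(\mathbf{x}',s)$. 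Now post-compose with a self-homeomorphism $h\colon M\times I\to M\times I$ that carries $(\mathbf{x}',s)$ to $\mathbf{q}=(\mathbf{x},s)$ while fixing $\mathbf{p}$. Such an $h$ exists because $M\times I$ is a manifold of dimension at least two: choose a short arc from $(\mathbf{x}',s)$ to $\mathbf{q}$ contained in a coordinate chart disjoint from $\mathbf{p}$, and realize the translation along it by an ambient isotopy supported in that chart. Setting $\phi:=h\circ\phi_0$ then yields the required embedding.

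The main obstacle I expect is precisely this degenerate case: the graph-of-a-function move is the clean first step but cannot hit two points of the same fiber, and removing the obstruction relies on a local homogeneity property of $M\times I$. A subsidiary subtlety is the low-dimensional exception in which $M$ is zero-dimensional (hence a finite discrete set) and the ambient-isotopy trick is unavailable; but then $M\times I$ is a disjoint union of open intervals and one can directly define $\phi$ by placing $\mathbf{p}$ and $\mathbf{q}$ in the $\{\mathbf{x}\}\times I$ component and distributing the remaining points of $M$ arbitrarily among the other components, trivially obtaining an embedding since $M$ is discrete.
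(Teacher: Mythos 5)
Your proof is correct and, in its main case, is exactly the paper's argument: the paper also defines $\phi$ as the graph $\mathbf{v}\mapsto(\mathbf{v},f(\mathbf{v}))$ of a continuous $f\colon M\to I$ with $f(\mathbf{x})=t$, $f(\mathbf{y})=s$, and concludes it is an embedding (the paper via compact-to-Hausdorff, you via the continuous inverse $\pi_1$ — both fine, yours not even needing compactness). Where you go beyond the paper is in noticing that such an $f$ need not exist when $\mathbf{x}=\mathbf{y}$ but $t\neq s$: the paper's proof silently assumes this case away, whereas the lemma as stated allows it, so strictly speaking the published proof has a gap that your auxiliary-point-plus-ambient-isotopy patch repairs (and in the paper's application $M_t=S^{k-1}\times S^{l-1}$ with $k,l>1$ is connected of positive dimension, so your patch applies). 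One small quibble: in the sub-case where $M$ is a single point your parenthetical ``$\mathbf{p}=\mathbf{q}$ is forced'' is not right — one can still have $t\neq s$, and then the lemma is simply false, since $\image(\phi)$ is a singleton; this pathological case does not arise in the application, but it should be excluded from the statement rather than argued around.
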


\begin{proof}
 Let $f:M\rightarrow I$ be any
continuous function such that $f({\bf x}) = t$ and $f({\bf y}) = s$.
Set $\phi: M\rightarrow M\times I$ to be $\phi({\bf v}) = ({\bf v},
f({\bf v}))$ for any ${\bf v}\in M$.  By construction, $\phi$ is
continuous, since $f$ is continuous. It is one-to-one, since it is
the identity on the first coordinate of the image.  Since $M$
is compact, $M\times I$ is Hausdorff and $\phi$ is continuous and
one-to-one, $\phi^{-1}$ is also continuous~\cite[Corollary 5.9.2]{Su}.
Hence $M$ is homeomorphic to the image of  $\phi$.   
\end{proof}

 \begin{thm}
\label{gstConnected}
If the quadratic form $Q_n$ (equivalently, the Hessian matrix $H_n$)
has at least two strictly positive and at least two strictly negative
eigenvalues and $n\geq 8$, then $\GST_n$ is connected.  If $H_n$ has
only one strictly positive or one strictly negative eigenvalue, then
$\GST_n$ is disconnected.
 \end{thm}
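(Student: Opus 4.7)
The plan is to diagonalize $Q_n$ once and read off both directions from the same picture. By Propositions~\ref{eigenvalues} and~\ref{eigenspace}, choose an orthogonal matrix $P$ with $P^T Q_n P = D = \mathrm{diag}(0, d_2, \ldots, d_n)$, where the $0$ eigenvalue corresponds to the eigenvector ${\bf 1}$. Setting ${\bf y} = P^T({\bf p} - {\bf m})$, the equation $\psi({\bf p}) = 0$ becomes $\sum_{i=2}^n d_i y_i^2 = 0$, and the involution ${\bf p}\mapsto {\bf 1}-{\bf p}$ becomes ${\bf y}\mapsto -{\bf y}$. Write $p$ and $q$ for the numbers of strictly positive and strictly negative $d_i$, and let $L = \{{\bf p}\in[0,1]^n : p_s = p_{n-s+1}\text{ for all }s\}$, so that by Proposition~\ref{prop:influence} we have $\GST_n = \Ind_n \setminus L$; note that $L$ is a linear subspace of dimension $\lceil n/2\rceil$ containing ${\bf m}$.

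For the disconnectedness direction, assume $p=1$ (the case $q=1$ is symmetric). After reordering, $d_2 > 0$ and $d_3,\ldots,d_n < 0$, so the equation reads $d_2 y_2^2 = \sum_{i\ge 3}|d_i|y_i^2$. The relatively open sets $U_\pm = \{{\bf p}\in\Ind_n : \pm y_2({\bf p})>0\}$ are disjoint, and on $\Ind_n$ the locus $y_2 = 0$ forces $y_i = 0$ for all $i\ge 3$ as well, leaving only the line through ${\bf m}$ in the ${\bf 1}$-direction; every such point has all coordinates equal and thus lies in $L$. Hence $\GST_n \subseteq U_+ \sqcup U_-$ is a disjoint union of two relatively open subsets. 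Section~\ref{sec:points} provides a point of $\GST_n$, and the involution ${\bf p}\mapsto {\bf 1}-{\bf p}$ flips the sign of $y_2$ while preserving $\GST_n$ (Remark~\ref{involution}), so both pieces are non-empty and $\GST_n$ is disconnected.

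For the connectedness direction, assume $p,q\ge 2$ and $n\ge 8$, and fix ${\bf p},{\bf q}\in\GST_n$. By Proposition~\ref{allLines} the line segments from ${\bf p}$ and ${\bf q}$ to ${\bf m}$ both lie in $\Ind_n$, and since $p_s(t)-p_{n-s+1}(t) = (1-t)(p_s-p_{n-s+1})$, the open segments lie in $\Inf_n$. It therefore suffices to join two points ${\bf p}',{\bf q}'\in\GST_n$ arbitrarily close to ${\bf m}$. In the diagonalized coordinates, $\Ind_n$ near ${\bf m}$ is the quadric cone $\{\sum_{i\ge 2}d_iy_i^2=0\}$, and its intersection $M_\epsilon$ with a small sphere of radius $\epsilon$ about ${\bf m}$ is a compact manifold homeomorphic to the suspension of $S^{p-1}\times S^{q-1}$; since $p,q\ge 2$, the manifold $M_\epsilon$ is path-connected (in fact simply connected).

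To finish, apply Lemma~\ref{topology} with $M = M_\epsilon$ and $I$ a short open interval of radii, obtaining an embedded copy of $M_\epsilon$ in an annular neighborhood of ${\bf m}$ inside $\Ind_n$ passing through both ${\bf p}'$ and ${\bf q}'$. The main obstacle is arranging that this embedding avoids $L$, so that its image lies in $\GST_n$: a dimension count gives $\dim(L\cap \Ind_n) = \lceil n/2\rceil - 1$, hence codimension $\lfloor n/2\rfloor$ in $\Ind_n$, and when $n\ge 8$ this codimension is large enough that a transversality (or Alexander-duality) argument permits a small perturbation of the embedding away from $L$ while still interpolating between ${\bf p}'$ and ${\bf q}'$. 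Concatenating with the line-segment paths then yields a path in $\GST_n$ joining ${\bf p}$ and ${\bf q}$; the delicate step is carrying out this transversality argument precisely enough that the $n\ge 8$ threshold emerges as the sharp cutoff for the method.
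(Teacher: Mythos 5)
Your disconnectedness argument is correct and is essentially the paper's, phrased more cleanly: the paper also splits the solution set of ${\bf y}^TD{\bf y}=0$ into two pieces according to the sign of the single coordinate attached to the lone positive (or negative) eigenvalue, observes that the locus where that coordinate vanishes collapses onto the ${\bf 1}$-line (which fails influence), and uses the involution plus the explicit points of Section~\ref{sec:points} to populate both pieces. That half is fine.

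The connectedness half, however, has a genuine gap exactly at the decisive step, and you have flagged it yourself (``the delicate step is carrying out this transversality argument precisely enough that the $n\ge 8$ threshold emerges''). Two concrete problems. First, your codimension count is taken in the wrong space and does not produce the right threshold: $\mathrm{codim}_{\Ind_n}(L\cap\Ind_n)=\lfloor n/2\rfloor$ is already $\ge 2$ for $n\ge 4$, yet $\GST_4$ is disconnected and the method does not settle $n=5,6,7$; so ``large codimension, hence perturb the path off $L$'' cannot be the argument. The paper's actual mechanism is different: it fixes a level set of the cone, writes it as $I_s\times M_t$ with $M_t$ a compact $(n-3)$-manifold, embeds a copy of $M_t$ through both points via Lemma~\ref{topology}, sets $A=M_t\cap T_1$ (the trace of the no-influence subspace), and runs the long exact sequence of the pair $(M_t,M_t-A)$ together with Alexander duality, $H_i(M_t,M_t-A)\cong H^{m-i}(A)$. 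Connectivity of $M_t-A$ then requires $H^{m}(A)=H^{m-1}(A)=0$, which holds because $A$ is a proper closed subset of a $\lceil n/2\rceil$-sphere and $\lceil n/2\rceil\le m-1=n-4$; this inequality is precisely where $n\ge 8$ enters. Nothing in your sketch supplies a substitute for the vanishing of $H^{m-1}(A)$, which is the statement that $A$ does not separate $M_t$ --- that is the whole content of the theorem's hard direction. Second, a smaller but real defect: the link $M_\epsilon$ of the cone at ${\bf m}$ is the suspension of $S^{p-1}\times S^{q-1}$ and is \emph{not} a manifold (it has non-manifold points at the two suspension poles, where the full sphere meets the kernel direction), so Lemma~\ref{topology} as stated does not apply to it; the paper avoids this by working with $S_{s,t}$ for $t>0$, which excises the cone points and genuinely is $I_s\times(S^{k-1}\times S^{l-1})$. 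Your reduction to points near ${\bf m}$ along segments staying in $\Inf_n$ is correct and matches the paper, but as written the proof of connectedness is incomplete.
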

 \begin{proof}
 Let $\Inf_n^c$ denote the 
 linear subspace of $\RR^n$ of dimension $\lceil n/2 \rceil$ defined by:
$$x_i - x_{n-i+1}=0 \mbox{  for all } i \in [n]. $$

 Consider any two points ${\bf p}, {\bf q} \in \GST_n$.  We first show
 that, if $n\geq 8$ and $Q_n$ has certain eigenvalues, there is a path
 from ${\bf p}$ to ${\bf q}$ that lies entirely in 
 $\GST_n$. We then use related structures to argue that if $Q_n$ has
 exactly one strictly postive or strictly negative eigenvalue then
 $\GST_n$ is disconnected.  

 Since ${\bf m} = \frac{1}{2}{\bf 1}$,
 Proposition~\ref{affine} (or Taylor expansion using the fact that
 ${\bf m}$ is a zero of $\psi$ and a critical point) implies that 
$$\psi({\bf m} + {\bf x}) = {\bf x}^T Q_n {\bf x},$$
  where $Q_n$  is the matrix corresponding to the quadratic form $\psi$
  (see Section~\ref{sec:quadForm} and Remark~\ref{diagonalize}). 
Let $P$ and $D$ be as in Remark~\ref{diagonalize}, (i.e.  $P^TQ_nP = D$, 
where $D$ is the diagonal matrix of real eigenvalues of $Q$ and $P$ is
a real orthogonal matrix).  Let ${\bf y} = P^T{\bf x}$ (so ${\bf x} =
P{\bf y}$). We then have:
\begin{equation}
\label{center}
\psi({\bf m}+ {\bf x}) = {\bf x}^T Q_n {\bf x} = {\bf y}^T P^T  Q_n P {\bf y} = {\bf y}^T D {\bf y}.
\end{equation}

For our argument, we need a few subsets of $\RR^n$ which depend on $D$
and $P$.  The first two are 
$$T_1 = \{P^T{\bf x}: {\bf x} \in \Inf_n^c\},$$
and
$$T_2 = \{P^T{\bf x}: {\bf x} \in [-1/3, 1/3]^n\}.$$
Since $P$ has full rank, it follows that $T_1$ is a linear subspace of
$\RR^n$ of dimension $\lceil n/2 \rceil$, while  
$T_2$ is a convex polytope of dimension $n$, containing ${\bf 0}$.
The others are defined in the next paragraph.  
 
By Proposition~\ref{eigenspace}, $D$ has zero as an eigenvalue with
geometric multiplicity one.   
Suppose that $D$ has $k$ strictly positive eigenvalues, and $l$
strictly negative eigenvalues, so that $k + l + 1 = n$.  By
Theorem~\ref{eigenvalueNumbers}, $k>0$ and $l>0$.   We may assume that the first
eigenvalue is $0$ and that the next $k$ eigenvalues $\lambda_1, \ldots, \lambda_k$ are all
strictly positive, while the final $l$ eigenvalues, $\mu_1, \ldots,
\mu_l$ are all strictly negative.  For any $s>0$ and $t \geq 0$, the set 
$$S_{s,t}:=   \{{\bf y} \in \RR^n: -s < y_1 < s , \sum_{i=1}^k  \lambda_iy_i^2 = t
\mbox{ and } \sum_{j=1}^l (-\mu_j) y^2_{k+j}= t \}$$ 
is a set of solutions to the equation:
$${\bf y}^TD{\bf y} = 0.$$  

Observe that we have the homeomorphism $S_{s,t} \cong  I\times S^{k-1}\times S^{l-1}$.  If
$\min\{k,l\}> 1$, then $S_{s,t}$ is the cross product of an open
interval -- call it $I_s$ -- and a compact orientable $m =
(n-3)$-manifold which we denote by $M_t$.   However, if $\min\{k,l\}
= 1$ (say $k = 1$, so $l = n-2$)  then $S_{s,t}$ is the cross product
of the following three spaces:  a open interval $I_s$, two points
(i.e. $S^0$,  which comes from the equation $\lambda_1 y_2^2 = t$) and
an $(n-3)$-sphere.   

We first assume that $k,l>1$ and $n\geq 8$, and continue the proof
that $\GST_n$ is connected.  We then look at what happens if
$\min\{k,l\} = 1$ and argue that $\GST_n$ is disconnected.
Assume that $k,l>1$.

Set $s,t'>0 $ sufficiently small so that $S_{s,t'} \subseteq T_2$
(the requirement that $S_{s,t'} \subseteq T_2$ is so that ${\bf m} +
{\bf x}$ for ${\bf x} \in S_{s,t'}$ lies in $[0,1]^n$, which is a
requirement of Proposition \ref{affine}(ii) for ${\bf m} + {\bf x}$ to
be in $\Ind_n$).  Let ${\bf y_p} = c_1P^T {\bf p}$ and ${\bf y_q} = c_2P^T{\bf q}$ where $c_1>0$ and $c_2>0$ are chosen sufficiently small to ensure that, for some $t \in (0,t']$, we have:
${\bf y_p}, {\bf y_q} \in S_{s,t}$.

Write ${\bf y_p} = (u_p,{\bf u_p})$ and $
{\bf y_q} = (v_q,{\bf v_q})$.  By Lemma~\ref{topology}, there exists
$\phi:M_t \rightarrow I_s \times M_t  = S_{t,s}$ such that $\phi({\bf
  u_p}) = {\bf y_p}$ and $\phi({\bf v_q}) = {\bf y_q}$ and $M_t$ is
homemorphic to the image of $\phi$.  For ease of notation and
acknowledging the abuse, we set $M_t = \image(\phi)$.  

Set $A = M_t\cap T_1$. Thus $A$ is a closed and
bounded subspace of $\RR^{\lceil n/2 \rceil}$. Therefore, $A$ is a
proper closed subset of $M_t$ as 
long as $m = n-3 > \lceil n/2\rceil$, which is true for $n\geq
8$. In addition, $A$ is locally contractable (it is a CW-complex). 

In the following discussion, we compute all homology modules over
$\ZZ$.  Consider the terminal end of the 
long exact sequence relating homology to relative homology:
\begin{equation}
\label{exact}
\cdots\rightarrow H_1(M_t, M_t - A) \rightarrow H_0(M_t -
A)\rightarrow H_0 (M_t) \rightarrow H_0(M_t, M_t-A) \rightarrow 0.
\end{equation} 
By Alexander Duality~\cite[Proposition 3.46]{H} we have:
$$H_i(M_t, M_t-A)\cong  H^{m-i}(A).$$
Therefore,
$$H_1(M_t, M_t-A)\cong H^{m-1}(A) \mbox{ and } H_0(M_t, M_t-A)\cong H^m(A).$$

For $t >0 $, ${\bf 0}\notin M_t$ and therefore ${\bf 0}\notin A$.
However, ${\bf 0}\in \RR^{\lceil n/2 \rceil}$, so $A$ is a proper closed subset of
$\RR^{\lceil n/2 \rceil}$ and hence it is a proper closed subspace of
a compact manifold (sphere) of 
dimension  $\lceil n/2 \rceil$ as well.  Since $\lceil n/2 \rceil\leq
m-1$ for $n\geq 8$,   
by ~\cite[Proposition 6.5]{M}, $H^{m-1}(A) = H^m(A) = 0$  (we are using
that $A$ is a CW-complex so \u Cech cohomology coincides with singular
cohomology).  Hence 
the exactness of the sequence in (\ref{exact}) implies $$H_0(M_t -
A)\cong H_0 (M_t) \cong \ZZ.$$  Therefore, $M_t - A$ is connected.

By the connectivity of $M_t - A$ and the fact that $\phi$ is a
homeomorphism, there is a path in $\GST_n$ from ${\bf m} + P{\bf y_p}$
to ${\bf m} + P{\bf y_q}$.  We can then sandwich this path between the
straight-line-paths from ${\bf p}$ to ${\bf m} + P{\bf y_p}$ and from
${\bf m } + P{\bf y_q}$ to ${\bf q}$ (which are in $[0,1]^n$ for
sufficiently small $c_1, c_2$ and in $\GST_n$ by
Theorem~\ref{affine} and Remark~\ref{equivalence}) to obtain the required path in $\GST_n$ 
from ${\bf p}$ to ${\bf q}$. 

Now assume that $\min\{k,l\} = 1$.   Without loss of generality, take $k
= 1$ and hence $l = n-2$.  In this case, $S_{s,t} \cong I_s\times S^0\times S^{n-3}$.  
The $S^0$ consists of the two points which come from the equation $\lambda_1
y_2^2 = t$.  Thus we see that $S_{s,t}$ for $t >0$ is two copies of $I_s\times
S^{n-3}$, each located in the $y_2$ coordinate at the values $\pm
\sqrt{\frac{t}{\lambda_1}}$. Let $S^1_{s,t}$ and $S^2_{s,t}$ denote the
two copies of $I_s\times
S^{n-3}$ for a given $s$ and $t$.  Then for $t >0$ and all $s \geq 0$, these spaces are
disconnected and therefore the union over all $s\geq 0$, $t>0$ of
$S^1_{s,t}$ is disconnected from the union over all $s\geq 0$, $t>0$
of $S^2_{s,t}$.  The union of the spaces $S_{s,t}$ over all $s,t\geq 0$ is the set of all
solutions to ${\bf y}^TD{\bf y} = 0$, which is connected by  joining
$S^1_{s,t}$ and $S^2_{s,t}$ in the shared space $S^1_{s,0}
= S^2_{s,0}$.  However, $S^1_{s,0} = S^2_{s,0}$ is all points of the form ${\bf y} = (y,0,\ldots,
0)^T$ and  $P{\bf y} = y(\frac{1}{\sqrt{n}}, \ldots,
\frac{1}{\sqrt{n}})^T$, which is in $\Inf_n^c$.  Thus $\GST_n$
is disconnected, since the set of solutions to ${\bf y}^TD{\bf y}=0$
includes $\GST_n$ (by Eqn. (\ref{center})), and we have shown that elements of $\GST_n$
lie in two disjoint components of this space. 
\end{proof}

\begin{cor}\label{cor:connected}
$\GST_n$ is disconnected for $n = 3,4$ and $\GST_n$ is connected for
  $n \geq 8 $.  
\end{cor}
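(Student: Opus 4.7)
The plan is to combine Theorem~\ref{eigenvalueNumbers} with Theorem~\ref{gstConnected} for the connectedness direction, and handle the small cases $n=3,4$ by direct inspection of the Hessian's spectrum.

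For $n \geq 8$, the argument is essentially a one-liner. Theorem~\ref{eigenvalueNumbers} already guarantees that for every $n \geq 6$ the Hessian $H_n$ has at least two strictly positive and at least two strictly negative eigenvalues. In particular this holds for $n \geq 8$, so the first hypothesis of Theorem~\ref{gstConnected} is met, and that theorem directly yields that $\GST_n$ is connected. So this direction needs no new computation beyond quoting the results already proved.

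For $n=3,4$ the plan is to verify that $H_n$ has either only one strictly positive or only one strictly negative eigenvalue, so that the second clause of Theorem~\ref{gstConnected} applies. For $n=3$, a more direct route is available: Section~\ref{smalln} already showed, via the factorization $(p_1-p_3)(p_1-4p_2+3p_3)=0$ in Eqn.~(\ref{nIs3}), that $\GST_3$ is exactly the plane $p_1-4p_2+3p_3=0$ with the line $p_1=p_2=p_3$ removed, and this space visibly has two components. So for $n=3$ I would simply cite that earlier analysis (alternatively, one can diagonalize the $3\times 3$ Hessian computed from Eqn.~(\ref{Hessian}) with $N=4$ and read off signature $(1,1,0)$, which by Theorem~\ref{gstConnected} gives disconnectedness).

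For $n=4$ the only real computation in the corollary is needed: write down $H_4$ explicitly from Eqn.~(\ref{Hessian}) with $N=8$ (a $4\times 4$ matrix), confirm that $0$ is a simple eigenvalue (which is already guaranteed by Proposition~\ref{eigenspace}), and then compute the signature of the remaining $3\times 3$ block. I expect to find either a $(1,2,0)$ or $(2,1,0)$ signature, so that exactly one of the eigenvalues is strictly positive or exactly one is strictly negative; in either case the disconnectedness clause of Theorem~\ref{gstConnected} applies.

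The main obstacle, such as it is, is simply the explicit $n=4$ eigenvalue computation: the rest of the corollary is purely bookkeeping that assembles Theorems~\ref{eigenvalueNumbers} and~\ref{gstConnected}. No new ideas are required, and the cases $5 \leq n \leq 7$ are deliberately excluded from the statement (as noted in the introduction) precisely because they are not handled by this dichotomy.
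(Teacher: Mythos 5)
Your proposal is correct and matches the paper's own proof essentially verbatim: the paper likewise verifies by direct computation that $H_3$ has one positive and one negative eigenvalue and $H_4$ has one positive and two negative eigenvalues (invoking the disconnectedness clause of Theorem~\ref{gstConnected}, with the Section~\ref{smalln} analysis as an independent check for $n=3$), and for $n\geq 8$ it combines Theorem~\ref{eigenvalueNumbers} with the connectedness clause of Theorem~\ref{gstConnected}.
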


\begin{proof}
Direct computation shows that $H_3$ has one positive and one negative
eigenvalue and $H_4$ has one positive
eigenvalue and two negative eigenvalues and thus $\GST_3$ and $\GST_4$
are disconnected  (of course we also know this for
$n = 3$ from direct computation given in
Section~\ref{smalln}). Theorem~\ref{eigenvalueNumbers} shows that for
$n\geq 8$, $H_n$ has at least two strictly negative and two strictly
positive eigenvalues and therefore $GST_n$ is connected.   
\end{proof}

\section{Concluding comments}
We consider it an interesting question to determine whether $\GST_n$ for $n
= 5, 6, 7$ is connected or disconnected.
Theorem~\ref{eigenvalueNumbers} implies 
that $H_n$ (equivalently, $Q_n$) has at
least two positive and two negative eigenvalues for $n = 6,7$.
Direct computation shows the same is true for $n = 5$,  
but the dimensions of $M_t$ and $A$ do not suffice for the homology
argument given in the proof of Theorem~\ref{gstConnected}.

Further exploration of the topology of $\GST_n$ may be of interest, for
example classification up to homotopy or homeomorphism.  Also, note
that the (two) connected components of $\GST_3$ are contractable, and we
leave this question open for $n = 4$ (and $5\leq n\leq 7$, if they are
disconnected).  

We gave a thorough analysis of the GST set-up where $r = \frac{1}{2}$
and $p_k = q_k$.  One possible approach to the study of the
probabilities where influence and independence collide for more general values of
$r$, $p_k$,  and $q_k$ might be to treat $r$, $p_k$, $q_k$ as
variables in a ring $R = k[r, p_1, \ldots, p_n, q_1, \ldots, q_n]$ and
use polynomial ring theory. From a practical point of view, the
flexibility to allow $r$ to vary seems interesting.

\section{Acknowledgments}
We thank the Burroughs Wellcome Fund Collaborative Research Travel Grant, and the
New Zealand Marsden Fund for funding.  


\section{Appendix}
We include here the details for the computations of
$D_{\lfloor\frac{n}{2}\rfloor + 1}$ from the end of
Section~\ref{sec:quadForm}. As in that section, we set $H = H_n$ to
clean up the notation.  

We need to argue that $D_{\lfloor \frac{n}{2}\rfloor + 1}
>0$. We recall a few of the formulae found in the proof of
Theorem~\ref{eigenvalues} since we use them all:
\begin{align*}
D_1 & = H_{11} = \frac{2}{N^2},\\
L_{i1} & = \binom{n-1}{i-1}\mbox{, for } 1\leq i\leq n-1,\\
A_{ij} & = H_{ij} = D_1L_{i1}L_{j1}, \mbox{ for all } i\neq n-j,\ n-j+1,\\
L_{ij} & = -\frac{1}{D_j}\bigg(\sum_{k=2}^{j-1}L_{ik}L_{jk}D_k\bigg) \mbox{ for all } i\neq n-j,\ n-j+1,\\
L_{ij} & = 0 \mbox{ for all } 1 < i,j \leq \lfloor
\frac{n}{2}\rfloor,\\
D_i & =  - \frac{2}{N}\binom{n-2}{i-2} < 0, \mbox{ for all } 1 <
i\leq\lfloor\frac{1}{2}\rfloor.
\end{align*}

We first assume that $n$ is odd, so that $\lfloor
\frac{n}{2}\rfloor + 1 + \lfloor \frac{n}{2}\rfloor = n$. For ease of
notation, let $t = \lfloor \frac{n}{2}\rfloor + 1$.  Then: 
$$L_{tt-1} = \frac{1}{D_{t-1}} \bigg(H_{tt-1} -
\sum_{k=1}^{t-2}L_{tk}L_{t-1k}D_k\bigg).$$ 
However, $L_{t-1k} = 0$ for $2\leq k\leq t-2 < \lfloor \frac{n}{2}\rfloor $ since 
$t-1 = \lfloor \frac{n}{2}\rfloor$.    
Using that $D_{t-1} = -\frac{2}{N}\binom{n-2}{t-3}$, we have:
\begin{align}\label{Ltt}
L_{tt-1} = &  -\frac{1}{\frac{2}{N}\binom{n-2}{t-3}} \bigg(\frac{2}{N^2}
\binom{n-1}{t-1}\binom{n-1}{t-2} - \frac{2}{N}\binom{n-2}{t-1} -
\binom{n-1}{t-1}\binom{n-1}{t-2}\frac{2}{N^2}\bigg)\\
 = & \frac{\binom{n-2}{t-1}}{\binom{n-2}{t-3}}\notag.
\end{align}
Therefore:
\begin{align}
D_t = & H_{tt} - \sum_{k = 1}^{t-1}L_{tk}^2D_k\notag\\
= & \frac{2}{N^2}\binom{n-1}{t-1}^2 - \frac{2}{N}\binom{n-2}{t-2}+\epsilon -
\binom{n-1}{t-1}^2\frac{2}{N^2} - L_{tt-1}^2D_{t-1} \notag\\
= & - \frac{2}{N}\binom{n-2}{t-2} +\epsilon -
\bigg(\frac{\binom{n-2}{t-1}}{\binom{n-2}{t-3}}\bigg)^2\bigg(-
\frac{2}{N}\binom{n-2}{t-3}\bigg)\notag\\
=&  \frac{2}{N}\bigg(-\binom{n-2}{t-2} +
\frac{\binom{n-2}{t-1}^2}{\binom{n-2}{t-3}}\bigg) + \epsilon\notag\\
= & \frac{2}{N}\bigg(-\binom{n-2}{\lfloor\frac{n}{2}\rfloor - 1} +
\bigg(\frac{\lfloor\frac{n}{2}\rfloor +
  1}{\lfloor\frac{n}{2}\rfloor-1}\bigg)\binom{n-2}{\lfloor\frac{n}{2}\rfloor}\bigg)+\epsilon\notag\\
 = &
 \frac{2}{N}\binom{n-2}{\lfloor\frac{n}{2}\rfloor}\bigg(\frac{2}{\lfloor\frac{n}{2}\rfloor-1}\bigg)+\epsilon
 > 0.\label{last}
\end{align}
where (\ref{last}) uses the symmetry of the binomial.  

Now assume $n$ is even, so that $\lfloor\frac{n}{2}\rfloor =
\frac{n}{2}$.  This time, let $t = \frac{n}{2}$.  Then the entries of
$L$ we need to be concerned with are $L_{t+1,t-1}$ and $L_{t+1,t}$.
In both cases, as in Eqn.~(\ref{Ltt}), the sum has all terms zero,
except for the first one.  We note that $\epsilon$ potentially
appears in $L_{t+1k}$, but $L_{tk}$ or
$L_{t-1k}$ are still zero and hence the full sum is zero.  Therefore:
$$L_{t+1,t}  =\frac{\epsilon}{D_t} = -\frac{\epsilon N}{2\binom{n-2}{t-2}},$$ and
$$L_{t+1, t-1} = \frac{\binom{n-2}{t}}{\binom{n-2}{t-3}}.$$
We are now ready to compute $D_{t+1}$.
\begin{align*}
D_{t+1} =& A_{t+1,t+1} - \sum_{k = 1}^t L_{t+1k}^2 D_k\\
= &  -\frac{2}{N}\binom{n-2}{t-1} - L_{t+1t-1}^2D_{t-1}-
L_{t+1t}D_t\\
= & -\frac{2}{N}\binom{n-2}{t-1} -
\bigg(\frac{\binom{n-2}{t}}{\binom{n-2}{t-3}}\bigg)^2\bigg(-\frac{2}{N}\binom{n-2}{t-3}\bigg)
- \bigg(-\frac{\epsilon
  N}{2\binom{n-2}{t-2}}\bigg)^2\bigg(-\frac{2}{N}\binom{n-2}{t-2}\bigg)\\
=& \frac{2}{N}\bigg(-\binom{n-2}{t-1} +
\frac{\binom{n-2}{t}^2}{\binom{n-2}{t-3}}\bigg) +
\frac{\epsilon^2N}{2\binom{n-2}{t-2}}\\
 = & \frac{2}{N}\bigg(-\binom{n-2}{t-1}
 +\frac{(\frac{n}{2}+1)(n-2)!}{(\frac{n}{2} -
   2)(\frac{n}{2})!(n-\frac{n}{2}-2)!} \bigg)+
   \frac{\epsilon^2N}{2\binom{n-2}{t-2}}\\
= & \frac{2}{N}\bigg(-\binom{n-2}{t-1}
 +\frac{(\frac{n}{2}+1)(n-2)!}{(\frac{n}{2} -
   2)(\frac{n}{2})!(\frac{n}{2}-2)!} \bigg)+
   \frac{\epsilon^2N}{2\binom{n-2}{t-2}}\\
= & \frac{2}{N}\bigg(-\binom{n-2}{t-1}
 +\frac{(\frac{n}{2}+1)(\frac{n}{2}-1)}{(\frac{n}{2})(\frac{n}{2}-2)}\binom{n-2}{t-1}\bigg) +
  \frac{\epsilon^2N}{2\binom{n-2}{t-2}}\\
 = & 
 \frac{2}{N}\binom{n-2}{t-1}\bigg(\frac{n-1}{\frac{n}{2}(\frac{n}{2}-2)}\bigg)
 + \frac{\epsilon^2N}{2\binom{n-2}{t-2}} >0.
\end{align*}

\end{document}